\newcommand{\CC}{\mathbb{C}}
\newcommand{\FF}{\mathbb{F}}
\newcommand{\Qq}{\mathbb{Q}}
\newcommand{\QQ}{\mathbb{Q}}
\newcommand{\Rr}{\mathbb{R}}
\newcommand{\RR}{\mathbb{R}}
\newcommand{\ZZ}{\mathbb{Z}}
\newcommand{\Exc}{\operatorname{Exc}}
\newcommand{\Supp}{\operatorname{Supp}}
\newcommand{\bfD}{\mathbf{D}}
\newcommand{\bfDiv}{\mathbf{Div}}
\newcommand{\bfM}{\mathbf{M}}
\newcommand{\bfN}{\mathbf{N}}
\newcommand{\Bb}{\mathcal{B}}
\newcommand{\anN}{\mathcal{AN}}
\newcommand{\ma}{\mathcal{A}}
\newcommand{\ml}{\mathcal{L}}
\newcommand{\mn}{\mathcal{N}}
\newcommand{\mP}{\mathcal{P}}
\newcommand{\rmb}{\mathrm{b}}
\newcommand{\rmv}{\mathrm{v}}
\newcommand{\dist}{\mathrm{dist}}
\newcommand{\Div}{\mathrm{Div}}
\newcommand{\Gal}{\mathrm{Gal}}
\newcommand{\mult}{\mathrm{mult}}
\newcommand{\NE}{\mathrm{NE}}
\newcommand{\Norm}{\mathrm{Norm}}
\newcommand{\Nklt}{\mathrm{Nklt}}
\newcommand{\Nlc}{\mathrm{Nlc}}
\newcommand{\di}{\displaystyle}
\newtheorem{setup}{Set-up}
\title
[generalized canonical bundle formula]
{On a generalized canonical bundle formula for generically finite morphisms}
\author{\firstname{Jingjun}  \lastname{HAN}}
\address{Johns Hopkins University \\ Department of Mathematics \\ Baltimore, MD 21218 (USA)}
\email{jhan@math.jhu.edu}
\author{\firstname{Wenfei}  \lastname{LIU}}
\address{Xiamen University  \\ School of Mathematical Sciences \\ Siming South Road 422 \\ Xiamen, Fujian 361005 (China)}
\email{wliu@xmu.edu.cn}
\thanks{We would like to thank Vladimir Lazi\'c for asking a question on a preliminary version of the paper and Stefano Filipazzi for sending us his PhD thesis \cite{Filipazzi19phdthesis}. Thanks also go to a referee who pointed out a gap in an earlier version of the paper and gave us several helpful comments and suggestions.  Zhiyu Tian kindly helped us with the French translation of the abstract of the paper. Part of the work was done when the authors were visiting other places: the first author would like to thank Professors Gang Tian and Zhiyu Tian for support and hospitality during a stay at Peking University in May 2019; the second author would like to thank Professor TEO Lee-Peng for support and hospitality while teaching at Xiamen University Malaysia Campus during the 2019/04 semester. The second author was partially supported by the NSFC (No.~11971399, No.~11771294).}
\keywords{generalized pair, canonical bundle formula, subadjunction}
\subjclass[2020]{14E30, 14N30}
\begin{document}
\begin{abstract}
We prove a canonical bundle formula for generically finite morphisms in the setting of generalized pairs (with $\RR$-coefficients). This complements Filipazzi's canonical bundle formula for morphisms with connected fibres. It is then applied to obtain a subadjunction formula for log canonical centers of generalized pairs. As another application, we show that the image of an anti-nef log canonical generalized pair has the structure of a numerically trivial   log canonical generalized pair. This readily implies a result of Chen--Zhang. Along the way we prove that the Shokurov type convex sets for anti-nef log canonical divisors are indeed rational polyhedral sets.
\end{abstract}

\begin{altabstract}
Nous prouvons une formule de bundle canonique pour des
morphismes g\'en\'eriquement finis dans le cadre de paires g\'en\'eralis\'ees
(avec $\RR$-coefficients). Cela compl\`ete la formule de bundle canonique
de Filipazzi pour les morphismes \`a fibres connect´ees. Elle est ensuite
appliqu\'ee pour obtenir une formule de sous-jonction pour les centres
log canoniques de paires g\'en\'eralis\'ees. Comme une autre application, nous
montrons que l'image d'une paire g\'en\'eralis\'ee canonique anti-nef log
a la structure d'une paire g\'en\'eralis\'ee canonique log num\'eriquement
triviale. Cela implique un r\'esultat de Chen-Zhang. En route, nous prouvons que les ensembles convexes de type de Shokurov pour les diviseurs log canoniques anti-nef sont en effet des
ensembles poly\'edriques rationnels.
\end{altabstract}


\maketitle

\section{Introduction}\label{sec: introduction}
To study a projective morphism $f\colon X\rightarrow Z$ between normal varieties, it is crucial to find relations between the canonical divisors $K_X$ and $K_Z$. One prominent example is Kodaira's canonical bundle formula for elliptic surfaces (\cite{Kodaira64AJM}). Nowadays, thanks to the work of several authors (\cite{Mori87, Kawamata97subadj, Kawamata98subadj, Fujino99, FM00, Ambro04JDG, Ambro05Compos, FG12, FG14}), the canonical bundle formula of Kodaira has a far reaching extension  to higher dimensions and to the log case as follows.
\begin{thm}\label{thm: cbf}
Let $f\colon X\rightarrow Z$ be a projective surjective morphism between normal varieties with connected fibres. Suppose that there is an effective $\RR$-divisor $B$ on $X$ such that $(X, B)$ is a projective log canonical pair and $K_X +B\sim_{\RR, f} 0$.  
Then there is an effective $\RR$-divisor $B_Z$ and an $\RR$-divisor $M_Z$ on $Z$ such that
\begin{enumerate}[leftmargin=*]
\item[(i)] $K_Z+B_Z+M_Z$ is $\RR$-Cartier and $K_X +B \sim_{\RR} f^*(K_Z+B_Z+M_Z)$, and
\item[(ii)] there is a projective birational morphism $\mu\colon \widetilde Z\rightarrow Z$ and a nef $\RR$-divisor $M_{\widetilde Z}$ such that $M_Z=\mu_* M_{\widetilde Z}$; writing $K_{\widetilde Z}+B_{\widetilde Z}  + M_{\widetilde Z} = \mu^*(K_Z+B_Z+M_Z)$, $(\widetilde Z,B_{\widetilde Z})$ is a sub-log canonical pair.
\end{enumerate} 
\end{thm}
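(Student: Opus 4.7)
The plan is to follow the Ambro--Fujino--Gongyo approach: define the discriminant and moduli parts via log canonical thresholds along divisors of the base, then prove that the moduli part is nef on a suitable birational model. The $\RR$-coefficient case will be reduced to the $\QQ$-coefficient case by convex decomposition.

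First, I would take a log resolution $\pi\colon \widetilde X \to X$ of $(X,B)$ and, after Stein factorizing $f\circ\pi$, obtain a projective birational morphism $\mu\colon \widetilde Z \to Z$ and a morphism $\widetilde f\colon \widetilde X \to \widetilde Z$ with connected fibres that is equidimensional over codimension-one points of $\widetilde Z$. Define $\widetilde B$ by $K_{\widetilde X}+\widetilde B=\pi^*(K_X+B)$; then $(\widetilde X,\widetilde B)$ is sub-log canonical and $K_{\widetilde X}+\widetilde B\sim_{\RR,\widetilde f}0$. For each prime divisor $P\subset \widetilde Z$, set the coefficient of $P$ in $B_{\widetilde Z}$ equal to $1-t_P$, where $t_P$ is the supremum of real numbers $t$ such that $(\widetilde X,\widetilde B+t\,\widetilde f^*P)$ is sub-lc over the generic point of $P$. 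Only finitely many $P$ contribute nontrivially, and the definition of $t_P$ makes $(\widetilde Z,B_{\widetilde Z})$ sub-log canonical automatically. Using that $\widetilde f$ has connected fibres and that $K_{\widetilde X}+\widetilde B$ is $\widetilde f$-numerically trivial, one obtains an $\RR$-divisor $M_{\widetilde Z}$ on $\widetilde Z$, unique up to $\RR$-linear equivalence, with
\[
K_{\widetilde X}+\widetilde B \sim_\RR \widetilde f^*(K_{\widetilde Z}+B_{\widetilde Z}+M_{\widetilde Z}).
\]

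The main obstacle is to show that, after possibly replacing $\widetilde Z$ by a higher birational model, $M_{\widetilde Z}$ is nef. For klt $(X,B)$ with $\QQ$-coefficients this is Ambro's theorem, whose proof rests on variation of Hodge structures for the middle cohomology of the fibres; for the $\QQ$-coefficient lc case one can reduce to Ambro's klt statement via a perturbation argument combined with the MMP, as carried out by Fujino--Gongyo. To pass from $\QQ$ to $\RR$ coefficients, I would write $B=\sum r_i B_i$ as a finite convex combination with each $B_i$ a $\QQ$-divisor, $(X,B_i)$ lc, and $K_X+B_i\sim_{\QQ,f}0$; this is possible because the space of effective boundaries supported in $\Supp B$ with $K_X+(\cdot)\sim_{\RR,f}0$ cuts out a rational affine subspace on which the lc condition is cut out by finitely many rational inequalities, and $B$ lies in the lc locus. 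Applying the $\QQ$-version on a common model $\widetilde Z$ yields nef moduli divisors $M_{\widetilde Z,i}$, and the convex combination $M_{\widetilde Z}=\sum r_i M_{\widetilde Z,i}$ is nef. Moreover the discriminant is compatible with convex combinations since log canonical thresholds are concave on the input boundary.

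Finally, pushing forward and setting $B_Z=\mu_* B_{\widetilde Z}$ and $M_Z=\mu_* M_{\widetilde Z}$, the $\RR$-Cartier property of $K_Z+B_Z+M_Z$ follows from the construction (since $K_X+B$ descends through $f$ up to $\RR$-linear equivalence by the cone-theoretic trivial fibration criterion), the equivalence $K_X+B\sim_\RR f^*(K_Z+B_Z+M_Z)$ holds generically over $Z$ and hence globally after adjusting by a divisor supported in the exceptional locus of $\mu$, and the decomposition on $\widetilde Z$ is the crepant pullback, verifying (i) and (ii).
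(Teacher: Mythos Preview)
The paper does not prove Theorem~\ref{thm: cbf}; it is stated in the introduction as a known background result, attributed to the work of Kawamata, Ambro, Fujino--Gongyo and others via the cited references. So there is no proof in the paper to compare your proposal against. That said, your outline follows the standard Ambro--Fujino--Gongyo route, which is indeed the approach taken in those references.

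There is, however, a genuine imprecision in your reduction from $\RR$- to $\QQ$-coefficients. You write that ``the discriminant is compatible with convex combinations since log canonical thresholds are concave on the input boundary.'' Concavity of the lct gives only $t_P(B)\ge\sum r_i\,t_P(B_i)$, hence $B_{\widetilde Z}\le\sum r_i\,B_{\widetilde Z,i}$, which is an inequality, not an equality. With only this, the putative moduli part $\sum r_i\,M_{\widetilde Z,i}$ differs from the actual $M_{\widetilde Z}$ by the effective divisor $\sum r_i\,B_{\widetilde Z,i}-B_{\widetilde Z}$, and nef plus effective need not be nef. What makes the argument go through is that on a sufficiently high SNC model (where the moduli $\rmb$-divisor descends and $\widetilde B+\widetilde f^*P$ is SNC for every relevant $P$), each $t_P$ is computed by a \emph{fixed} prime divisor for all boundaries in a small neighbourhood of $B$; on that model the discriminant is genuinely affine-linear in the coefficients, and then $B_{\widetilde Z}=\sum r_i\,B_{\widetilde Z,i}$ and $M_{\widetilde Z}=\sum r_i\,M_{\widetilde Z,i}$ hold on the nose. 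You should state and use this linearity explicitly rather than appealing to concavity. (Compare the careful $\RR\to\QQ$ decomposition in Lemma~\ref{lem: R to Q} of the present paper, which preserves the non-klt and non-lc loci precisely to ensure this kind of compatibility.)
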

\noindent Theorem~\ref{thm: cbf} allows one to investigate $(X, B)$ in terms of $(Z, B_Z+M_Z)$ which typically has lower dimension. If $(X, B)$ has klt singularities then it is possible to choose $M_Z$ such that $(Z, B_Z+M_Z)$ is still a klt pair (\cite[Theorem~0.2]{Ambro05Compos}). However, if $(X, B)$ is only assumed to be log canonical, then it is not known that $(Z, B_Z+M_Z)$ can land in the category of log canonical pairs; it depends on the  semiampleness conjecture about the divisor $M_{\widetilde Z}$ appearing in (ii) (\cite[Conjecture 7.13.1]{PS09}, see also \cite[Conjecture~3.9]{Fujino15}). 

It is the idea of Birkar--Zhang (\cite{BZ16}) that one can view $(Z, B_Z+M_Z)$ together with the nef $\RR$-divisor $M_{\widetilde Z}$ on the higher birational model $\widetilde Z$ as a \emph{generalized pair} and investigate its geometry as for usual pairs. Note that $\widetilde Z$ is allowed to be replaced by an even higher birational model and $M_{\widetilde Z}$ by its pull-back, but the newly obtained generalized pair is understood to be the same as the original one. It is thus convenient to treat $M_Z$ as the trace of a $\rmb$-$\RR$-divisor $\bfM_f$ on $Z$, that is, $M_Z=\bfM_{f, Z} $ (see Definition~\ref{def: b-div} for $\rmb$-divisors); then the generalized pair at hand can be formally written as $(Z, B_Z+\bfM_f)$. Now, by the second part of Theorem~\ref{thm: cbf} (ii), the generalized pair $(Z, B_Z+\bfM_f)$ has log canonical singularities. (We refer to Section~\ref{sec: g pair} for the definition of generalized pairs as well as their singularities.)

Since its inception, the notion of generalized pairs has proven useful in birational geometry and people are working intensively on the minimal model program of generalized pairs as well as its implications (\cite{BZ16, Ljh18Sarkisov,HM18weak,HL18wzd, Filipazzi18gcbf,Filipazzi18genpair, HanLi20accumulation,LT19,Filipazzi19phdthesis, Bir19, Li20fibrations, Chen20compgpair,HanLi20fujita, HanLiu20nonvanishing, Filipazzi20boundedness,FS20,Birkar20polarvar, HanLjh20,LMT20,ChenXue20compgpair,BirkarChen20,Birkar20connectedness,FS20connectedness,FW20connect}). There are also interesting interactions with other threads of the minimal model program such as the theory of quasi-log canonical pairs (\cite{Fujino18}) and the Generalized Nonvanishing Conjecture in \cite{LP20genabundance1}. We refer the reader to \cite{Birkar20genpair} for an exposition on some of the recent developments and open problems involving generalized pairs.

This paper treats the canonical bundle formula in the setting of generalized pairs. Filipazzi  proves a canonical bundle formula for morphisms with connected fibres from generalized pairs, under the assumption that the divisors involved have $\QQ$-coefficients (\cite[Theorem~1.4]{Filipazzi18gcbf} and \cite[Theorem 6]{Filipazzi19phdthesis}).  In this paper, we generalize his formula to any projective surjective morphism, at the same time relaxing the condition on the coefficients of the divisors.

\begin{thm}\label{thm: main}
	Let $\FF$ be either the rational number field $\QQ$ or the real number field $\RR$. Fix a quasi-projective scheme $S$. Let $f\colon X\to Z$ be a surjective morphism of normal varieties, projective over $S$. Suppose that there is an $\FF$-divisor $B$ and a nef $\rmb$-$\FF$-divisor $\bfM$ such that
\begin{itemize}[leftmargin=*]
\item $(X/S,B+\bfM)$ is a generalized pair over $S$;
\item $\bfM$ is an $\FF_{> 0}$-linear combination of nef$/S$ $\QQ$-Cartier $\rmb$-divisors;
\item $(X/S,B+\bfM)$ has  log canonical singularities over the generic point of $Z$;
\item $K_X+B+\bfM_X\sim_{\FF,f}0$.
\end{itemize}
Then there is an $\FF$-divisor $B_Z$ and a nef $\rmb$-$\FF$-divisor $\bfM_f$ on $Z$ such that $(Z/S, B_Z+\bfM_f)$ is a generalized pair over $S$ and $$K_X+B +\bfM_X\sim_{\FF} f^*(K_Z+B_Z +\bfM_{f,Z}).$$
Moreover, if $(X/S, B+\bfM)$ has log canonical (resp.~klt) singularities, then so does $(Z/S, B_Z+\bfM_{f})$.
\end{thm}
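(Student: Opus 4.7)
My plan is to reduce to two cases via the Stein factorization of $f$: the connected-fibre factor is handled by extending Filipazzi's theorem to $\RR$-coefficients, while the finite factor requires a new argument. Factor $f = g\circ h$ with $h\colon X \to Y := \spec_Z f_*\mo_X$ projective with connected fibres and $g\colon Y \to Z$ finite. The pair $(X/S, B+\bfM)$ remains lc/klt over the generic point of $Y$ (two generic fibres differ only by a finite base change, étale in codimension one), and $K_X + B + \bfM_X \sim_{\RR, h} 0$ follows from the hypothesis $\sim_{\RR, f} 0$, so a canonical bundle formula applies to $h$.

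\textbf{($\RR$-extension of Filipazzi.)} Filipazzi's generalized canonical bundle formula is stated for $\QQ$-coefficients. Using the hypothesis that $\bfM = \sum t_i \bfM_i$ is an $\RR_{>0}$-combination of nef $\QQ$-Cartier b-divisors, I would decompose $B = \sum t_i B_i$ so that each $(X/S, B_i + \bfM_i)$ is a $\QQ$-generalized pair of the prescribed singularity type with $K_X + B_i + \bfM_{i,X} \sim_{\QQ, h} 0$. Apply Filipazzi's theorem to each to obtain $(Y, B_{Y,i} + \bfM_{h,i})$, and set $B_Y := \sum t_i B_{Y,i}$, $\bfM_h := \sum t_i \bfM_{h,i}$. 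Linearity of the formula and convexity of the lc/klt condition on a common log resolution produce an $\RR$-generalized pair $(Y, B_Y + \bfM_h)$ of the right type with $K_X + B + \bfM_X \sim_\RR h^*(K_Y + B_Y + \bfM_{h, Y})$.

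\textbf{(Finite-morphism formula.)} Writing $K_Y + B_Y + \bfM_{h, Y} \sim_\RR g^* L$ for some $\RR$-Cartier $L$ on $Z$, I would pass to a Galois closure $\widetilde g\colon \widetilde Y \to Z$ of $g$, factoring as $\widetilde Y \xrightarrow{p} Y \xrightarrow{g} Z$ with $G = \Gal(K(\widetilde Y)/K(Z))$. Pulling back via $p$ and using the ramification formulas $K_{\widetilde Y} = p^*K_Y + R_p = \widetilde g^* K_Z + R_{\widetilde g}$, the data on $\widetilde Y$ becomes $G$-equivariant up to $\RR$-linear equivalence, and averaging over $G$ rigidifies it. Define $B_Z := \tfrac{1}{|G|}\widetilde g_*\bigl(p^*B_Y - R_p + R_{\widetilde g}\bigr) = \tfrac{1}{\deg g}g_*(B_Y + R_g)$ and $\bfM_f$ as the descent to $Z$ of the $G$-invariant b-divisor $p^*\bfM_h$; by construction, $\widetilde g^*(K_Z + B_Z + \bfM_{f,Z}) \sim_\RR \widetilde g^* L$, hence $K_Z + B_Z + \bfM_{f,Z} \sim_\RR L$ as desired.

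\textbf{(Singularities and main obstacle.)} For a prime divisor $E$ over $Z$, Riemann-Hurwitz applied to $\widetilde g$ relates $a(E; Z, B_Z + \bfM_f)$ to discrepancies of divisors over $\widetilde Y$ dominating $E$, and finite morphisms preserve the sign of $1 + a$; thus lc (resp. klt) of $(Y, B_Y + \bfM_h)$ transfers through $p$ and descends to lc (resp. klt) of $(Z, B_Z + \bfM_f)$. The main obstacle lies in the finite-cover step: ensuring $\bfM_f$ is a genuine \emph{nef b-$\RR$-divisor} on $Z$ (not merely nef on a single birational model) and that $B_Z$ is effective. Here the hypothesis that $\bfM$ is an $\RR_{>0}$-combination of nef $\QQ$-Cartier b-divisors (rather than an arbitrary nef b-$\RR$-divisor) is what makes the descent clean, since each rational component descends separately via the Galois cover to a nef $\QQ$-Cartier b-divisor on $Z$ and these reassemble to yield $\bfM_f$.
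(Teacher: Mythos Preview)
Your overall architecture --- Stein factorization, then Filipazzi on the connected-fibre factor, then a Hurwitz/Galois argument on the finite factor --- is exactly the paper's. The substantive gap is in your $\RR$-to-$\QQ$ reduction. You propose to reuse the coefficients $t_i$ from the given decomposition $\bfM=\sum t_i\bfM_i$ and then find rational $B_i$ with $B=\sum t_i B_i$, each $(X/S,B_i+\bfM_i)$ of the prescribed type, and $K_X+B_i+\bfM_{i,X}\sim_{\QQ,h}0$. For the outputs to reassemble into $K_X+B+\bfM_X\sim_\RR h^*(K_Y+B_Y+\bfM_{h,Y})$ you already need $\sum t_i=1$, which the hypothesis never promises; and even after rescaling, there is no reason a rational boundary $B_i$ with $K_X+B_i+\bfM_{i,X}$ $\QQ$-Cartier, $\sim_{\QQ,h}0$, and $(X,B_i+\bfM_i)$ lc should exist for the \emph{particular} $\bfM_i$ handed to you. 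The paper's reduction lemma instead perturbs the coefficients of $B$, the coefficients $\mu_j$ in $\bfM=\sum_j\mu_j\bfM_j$, \emph{and} the coefficients of the $\RR$-linear relation $K_X+B+\bfM_X\sim_\RR f^*D$ simultaneously: one packages all of them into a single $\QQ$-linear map into $\Div(\widetilde X)\otimes_\ZZ\RR$, notes that the fibre through the given real point is a nonempty $\QQ$-affine subspace, and picks nearby rational points $P_\alpha$ whose convex combination recovers the original point. This diagonal perturbation, not a decomposition along the fixed $\bfM_i$, is what produces $\QQ$-pairs $(X,B^{(\alpha)}+\bfM^{(\alpha)})$ with $K_X+B^{(\alpha)}+\bfM^{(\alpha)}_X\sim_{\QQ,f}0$ and identical non-klt/non-lc loci. (The paper applies this reduction once, \emph{before} the Stein factorization, so Filipazzi's $\QQ$-theorem and the finite-morphism formula are each invoked only for $\QQ$-coefficients.)

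On the finite step you correctly flag the descent of $\bfM_f$ as the obstacle, but ``each rational component descends via the Galois cover'' does not yet resolve it: you must define the trace $\bfM_{f,Z'}$ on \emph{every} birational model $Z'\to Z$ and prove these traces are compatible under pushforward. The paper sets $\bfM_{f,Z'}:=\frac{1}{\deg f}f'_*\bfM_{X'}$ (with $X'$ the normalized main component of $X\times_Z Z'$), proves the commutation $\widetilde\mu^*(\widetilde f_*D)=\widehat f_*(\widetilde\rho^*D)$ for finite $\widetilde f$ and birational $\widetilde\mu$ via norms of local equations, and uses a $G$-equivariant resolution to arrange a model $\widetilde X\to\widetilde Z$ that is simultaneously finite and to which $\bfM$ descends; together these show $\bfM_f$ is $\QQ$-Cartier as a b-divisor (it descends to $\widetilde Z$) and nef there. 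Your Galois-closure sketch gives nefness on one model but omits this compatibility lemma.
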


\noindent Theorem~\ref{thm: main} is a combination of the aforementioned result of Filipazzi with our canonical bundle formula for generically finite morphisms (Theorem~\ref{thm: gen finite}). We remark that Theorem~\ref{thm: gen finite} is a generalization of Fujino--Gongyo's result \cite[Theorem~3.1]{FG12} for log canonical pairs. Compared to \cite[Theorem~3.1]{FG12}, \cite[Theorem 1.4]{Filipazzi18gcbf} and \cite[Theorem 6]{Filipazzi19phdthesis}, there are two nontrivial ingredients in the proof of Theorem~\ref{thm: main}. The first ingredient is to reduce the theorem to the case with $\QQ$-coefficients; see Lemma~\ref{lem: R to Q}. Recall that the nonvanishing conjecture does not hold for generalized pairs (cf.~\cite{HanLiu20nonvanishing}), so the proof of \cite[Theorem~3.1]{FG12} does not work in our setting. The second ingredient is the construction of $B_Z$ and $\bfM_f$, which resembles the proof of \cite[Lemma~1.1]{FG12}; additional work is then done to show that the $\rmb$-$\QQ$-divisor $\bfM_f$ is nef.

One application of the canonical bundle formula is to establish subadjunction formulas for log canonical centers of generalized pairs of codimension larger than one. The subadjunction is proved in \cite[Theorems~1.5 and 6.7]{Filipazzi18gcbf} when the log canonical center is exceptional or when the underlying variety of the generalized pair is $\QQ$-factorial klt.  In Section~\ref{sec: subadj} we apply Theorem~\ref{thm: main} to obtain a subadjunction formula for general log canonical centers; we also prove the main result of \cite{CZ2013QofDPSJEMS} in the setting of  log canonical generalized pairs. 

Along the way we establish in Section~\ref{sec: polytope} that the Shokurov type convex set of anti-nef log canonical divisors is a rational polyhedral set. This is of independent interest.
\medskip

\noindent{\bf Notation and Conventions.} We work throughout over the complex number field $\CC$. A scheme means a separated scheme of finite type over $\CC$. The canonical divisors $K_X$ of normal varieties $X$ are always so taken that $f_*K_X=K_Y$ holds for a proper birational morphism $f\colon X\rightarrow Y$.
For a set $A$ of real numbers, we use $A_{> 0}$ to denote the subset $\{a\in A \mid a> 0\}$; for any $b\in \RR$, we denote $bA:=\{ba\mid a\in A\}$.


\section{Preliminary}
\subsection{Divisors}
We refer to \cite{Fujino17book} for divisors, $\QQ$-divisors, $\RR$-divisors and the Kleiman--Mori cones of curves. We give now the definition of $\rmb$-divisors, introduced by Shokurov; a nice discussion about this notion can be found in \cite{Corti07}.
\begin{defn}\label{def: b-div}
Let $X$ be a normal variety and let $\Div X$ be the free abelian group of Weil divisors on $X$. A \emph{$\rmb$-divisor} on $X$ is an element of the projective limit $${ \bfDiv} X = \lim_{Y\rightarrow X} \Div Y,$$ where the limit is taken over all the pushforward homomorphisms $\rho_*\colon \Div Y\rightarrow \Div X$ induced by proper birational morphisms $\rho\colon Y\rightarrow X$. In other words, a $\rmb$-divisor $\bfD$ on $X$ is a collection of Weil divisors $\bfD_Y$ on higher models of $X$ that are compatible under pushforward; the divisors $\bfD_Y$ are called the traces of $\bfD$ on the birational models $Y$.
\end{defn}

Let $\FF$ be either the rational number field $\QQ$ or the real number field $\RR$. Then a \emph{$\rmb$-$\FF$-divisor} is defined to be an element of $({\bfDiv} X)\otimes_\ZZ\FF$. The \emph{Cartier closure} of an $\FF$-Cartier $\FF$-divisor $D$ on $X$ is the $\rmb$-$\FF$-divisor $\overline D$ with trace $\overline D_Y=\rho^* D$ for any proper birational morphism $\rho\colon Y\rightarrow X$.
 A $\rmb$-$\FF$-divisor $\bfD$ on a normal variety $X$ is \emph{$\FF$-Cartier}  if $\bfD = \overline{D_Y}$ where  $D_Y$ is an $\FF$-Cartier $\FF$-divisor on a birational model over $X$; in this situation, we say $\bfD$ descends to $Y$. A $\rmb$-$\FF$-divisor is \emph{nef}  if it descends to a nef $\FF$-divisor on a birational model over $X$.
 
\subsection{Generalized pairs}\label{sec: g pair}
\begin{defn}\label{def: g pair}
Let $S$ be a scheme. A \emph{generalized pair} over $S$ consists of 
\begin{itemize}[leftmargin=*]
\item  a normal variety $X$ equipped with a projective morphism $X\rightarrow S$,
\item  an $\RR$-divisor $B$, and
\item a nef $\rmb$-$\RR$-divisor $\bfM$ on $X$, which is then the Cartier closure of a nef $\RR$-divisor $\widetilde M$ on a normal variety $\widetilde X$ equipped with a projective birational morphism $\rho\colon\widetilde X\rightarrow X$,
\end{itemize}
such that $K_X+B+M$ is $\RR$-Cartier, where $M=\rho_* \widetilde M$.  We denote the generalized pair by $(X/S, B+\bfM)$; if $S$ is a point then we drop $S$ from the notation.
\end{defn}

\begin{rmk} 
The $\RR$-divisor $M$ appearing in the above definition is nothing but the trace $\bfM_X$ of $\bfM$ on $X$.
\end{rmk}

Let $(X/S, B+\bfM)$ be a generalized pair.  For any prime divisor $E$ on a higher birational model $\widetilde X$, equipped with a proper birational morphism $\rho\colon \widetilde X\rightarrow X$, we write $\rho^*(K_X+B+\bfM_X) = K_{\widetilde X} + \widetilde B+\bfM_{\widetilde X}$, and define the \emph{discrepancy} of $E$ with respect to $(X/S, B+\bfM)$ as $$a_E(X/S, B+\bfM) = -\mult_E \widetilde B.$$ Then $(X/S, B+\bfM)$ is said to be \emph{sub-log canonical} (resp.~\emph{sub-Kawamata log terminal}) if  $a_E(X/S, B+\bfM)  \geq -1$ (resp.~$a_{E} (X/S, B+\bfM) >-1$) for any prime divisor $E$ over $X$. We omit the prefix "sub" everywhere if $B$ is effective. Log canonical (resp.~Kawamata log terminal) is often abbreviated to lc (resp.~klt), as usual. If there is a prime divisor $E$ over $X$ with $a_E(X/S, B+\bfM)\leq -1$ (resp.~$a_E(X/S, B+\bfM)< -1$) then its image in $X$ is called a \emph{non-klt center} (resp.~\emph{non-lc center}). A non-klt center that is not a non-lc center is called a \emph{lc center}. The union of all the non-klt centers (resp.~non-lc centers), denoted by $\Nklt(X/S, B+\bfM)$ (resp.~$\Nlc(X/S, B+\bfM)$), is called the \emph{non-klt locus} (resp.~\emph{non-lc locus}) of $(X/S, B+\bfM)$. 

\begin{rmk}
(a)   
Let $\rho\colon \widetilde X \rightarrow X$ be a log resolution such that $\bfM$ descends to $\widetilde X$ and $\Supp(\rho_*^{-1}B \cup \Exc(\rho))$ is a simple normal crossing divisor. Write  $K_{\widetilde X}+ \widetilde B +\bfM_{\widetilde X} = \rho^*(K_{X}+ B +\bfM_{X} )$. Then it is enough to look at the coefficients of $\widetilde B$ in order to determine the non-klt (resp.~non-lc) locus of $(X/S, B+\bfM)$. As a consequence, the non-klt  (resp.~non-lc) locus is a Zariski closed subset of $X$; see \cite[Lemma~2.3.20]{Fujino17book}.

(b) Our definition of non-klt (resp.~non-lc) locus is set-theoretic, without taking possible non-reduced scheme structures into account; compare \cite[2.3.11]{Fujino17book}.

\end{rmk}

\section{Shokurov type anti-nef polytopes}\label{sec: polytope}

In this section, we prove that the Shokurov type convex set of anti-nef log canonical divisors is a rational polyhedral set, following the treatment of \cite[Section~3]{Birkar11} and \cite{HL18wzd}; see Proposition~\ref{prop: rat polytope}. This is of independent interest and its corollary will be used in the proof of Theorem~\ref{thm: CZ}. 

\medskip

\begin{defn} Let $V$ be a finite dimensional vector space over $\RR$ with a specified basis $\{\rmv_i\}_i$. A subset $\mP\subset V$ is called a \emph{polyhedral set} if it is the intersection of finitely many closed halfspaces of $V$. Note that a polyhedral set can be empty or unbounded in this paper. The \emph{dimension} of a polyhedral set is defined to be the dimension of its affine hull.
A \emph{face} of a polyhedral set $\mP$ is the intersection of $\mP$ with a hyperplane such that $\mP$  lies in one of the two closed halfspaces defined by the hyperplane. We use $\partial \mP$ to denote the relative boundary of $\mP$, which is the union of all proper faces of $\mP$.   A polyhedral set is called a \emph{polytope} if it is bounded and is called \emph{rational} if its defining linear inequalities can be chosen to have rational coefficients with respect to the given basis $\{\rmv_i\}_i$. 
\end{defn}

We will stick to the following set-up.
\begin{setup}\label{su}
Let $S$ be a scheme and $X$ a $\QQ$-factorial projective/$S$ normal variety with lc singularities. Let $\{B_i\}_{i\in I}$ be a set of finitely many distinct prime divisors and $\{\bfM_j\}_{j\in J}$ a set of finitely many distinct nef/$S$ $\QQ$-Cartier $\rmb$-divisors on $X$.
Consider the $\RR$-vector space obtained by taking the outer direct sum
\[
V:=\left(\bigoplus_{i\in I} \RR B_i\right) \bigoplus\left(\bigoplus_{j\in J}\Rr \bfM_{j}\right).
\] 
The elements of $V$ are written as $(\Delta, \bfN)$ with $\Delta \in \bigoplus_i \RR B_i$ and $\bfN\in\bigoplus_j\Rr \bfM_{j}$.  One can define a norm $||\cdot||$ on $V$ as follows: for an element $(\Delta,\bfN)\in V$ with $\Delta=\sum_i x_i B_i$ and $\bfN = \sum_j y_j \bfM_j$, set
$$||(\Delta, \bfN)||:=\max_{i,j}\{|x_i|, |y_j|\}.$$ Let $V_{\geq 0} = \{(\sum_i x_i B_i,  \sum_j y_j \bfM_j) \in V \mid x_i\geq 0, y_j\geq 0 \text{ for all } i, j\}.$
Define 
\begin{equation*}\label{eq: lc polytope}
\ml(\{B_i\}_{i\in I}, \{\bfM_j\}_{j\in J}):=\left\{(\Delta, \bfN)\in V_{\geq 0} \mid\left(X/S,\Delta+\bfN\right) \text{ is generalized  lc}\right\}.
\end{equation*}
When the sets $\{B_i\}_{i\in I}, \{\bfM_j\}_{j\in J}$ are clear from context, we use $\ml_{IJ}$ to denote $\ml(\{B_i\}_{i\in I}, \{\bfM_j\}_{j\in J})$ in order to simplify notation. \end{setup}

Note that $\ml_{IJ}$ is not empty because $(0,0)\in \ml_{IJ}$. It is known that $\ml_{IJ}$ is a rational polyhedral set with respect to the basis $\{(B_i, 0)\}_i\cup\{(0,\bfM_j)\}_j$ of $V$ (\cite[Section~3.3]{HL18wzd}). We refine the description of $\ml_{IJ}$ as follows.
\begin{lem}\label{lem: L}
Let the notation be as in Set-up \ref{su}.
For any subset $J'\subset J$, possibly empty, let 
\[
p_{J'}\colon V\rightarrow \left(\bigoplus_{i\in I} \RR B_i\right)\bigoplus\left(\bigoplus_{j\in J'}\Rr \bfM_{j}\right).
\]
and $q_j\colon V\rightarrow \RR\bfM_j$ ($j\in J$) be the projections to the direct summands. Then the following holds.
\begin{enumerate}[leftmargin=*]
\item[(i)] The image of $\ml_{IJ}$ under $p_{J'}$ is $\ml_{IJ'}$, where $\ml_{IJ'} = \ml(\{B_i\}_{i\in I}, \{\bfM_j\}_{j\in J'})$ by the convention in Set-up~\ref{su}. 
\item[(ii)] For a given $j_0\in J$, the image $q_{j_0}(\ml_{IJ})$ is unbounded if and only if $\bfM_{j_0}$ descends to $X$, that is, there is a nef divisor $M_{j_0}$ on $X$ such that $\bfM_{j_0}$ is the Cartier closure $\overline{M_{j_0}}$.
\item[(iii)] Let $J_1=\{j\in J \mid  \bfM_j \text{ descends to } X\}$ and $J_2=J\backslash J_1$. Then the rational polyhedral set $\ml_{IJ_2}$ is bounded and
\begin{equation*}\label{eq: decomp 1}
\ml_{IJ}\cong  \left(\bigoplus_{j\in J_1} \RR_{\geq 0} \bfM_j\right)\bigoplus \ml_{IJ_2}.
\end{equation*}
\end{enumerate} 
\end{lem}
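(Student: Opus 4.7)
The plan is to funnel everything through a single discrepancy computation carried out on a common log resolution.

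First, I will choose $\rho\colon \widetilde X\to X$ to be a log resolution of $(X,\sum_{i\in I} B_i)$ to which every $\bfM_j$ descends. For each $j\in J$, the divisor $D_j := \rho^*\bfM_{j,X}-\bfM_{j,\widetilde X}$ is $\rho$-exceptional (since it pushes forward to $0$) and $-D_j$ is $\rho$-nef (as $\bfM_{j,\widetilde X}$ is nef on $\widetilde X$ while $\rho^*\bfM_{j,X}$ has zero intersection with $\rho$-contracted curves). The negativity lemma therefore gives $D_j = F_j$ for some effective $\rho$-exceptional divisor $F_j\geq 0$. A direct verification shows $F_j=0$ if and only if $\bfM_j$ descends to $X$: one direction is immediate from the definition, and for the other, $F_j=0$ gives $\bfM_{j,\widetilde X}=\rho^*\bfM_{j,X}$, whence the projection formula forces $\bfM_{j,X}$ to be nef on $X$.

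Second, for any $(\Delta,\bfN)=\bigl(\sum_i x_iB_i,\,\sum_j y_j\bfM_j\bigr)\in V_{\geq 0}$, substituting $\rho^*\bfN_X=\bfN_{\widetilde X}+\sum_j y_jF_j$ into $\rho^*(K_X+\Delta+\bfN_X)=K_{\widetilde X}+\widetilde\Delta+\bfN_{\widetilde X}$ yields the key identity
\[
\widetilde\Delta \;=\; \rho^*(K_X+\Delta)-K_{\widetilde X}+\sum_{j\in J_2}y_jF_j,
\]
since $F_j=0$ for $j\in J_1$. Thus $(X/S,\Delta+\bfN)$ is generalized lc if and only if every coefficient of $\widetilde\Delta$ is at most $1$, and this condition depends only on $\Delta$ together with the $J_2$-part of $\bfN$.

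With this formula in hand the three assertions fall out quickly. For (i): passing from $(\Delta,\bfN)$ to $p_{J'}(\Delta,\bfN)$ subtracts the effective term $\sum_{j\in(J\setminus J')\cap J_2}y_jF_j$ from $\widetilde\Delta$, so the lc bound is preserved; the reverse inclusion $\ml_{IJ'}\subset p_{J'}(\ml_{IJ})$ is immediate by extending by zero on the coordinates in $J\setminus J'$. For (ii): if $j_0\in J_1$ the identity is independent of $y_{j_0}$, so $(0,y\bfM_{j_0})\in\ml_{IJ}$ for every $y\geq 0$ (using that $X$ itself is lc), whence $q_{j_0}(\ml_{IJ})\supset\RR_{\geq 0}$; if $j_0\in J_2$, pick any prime component $E$ of the nonzero $F_{j_0}$, and the identity shows $\mult_E\widetilde\Delta$ grows linearly in $y_{j_0}$ with positive slope $\mult_EF_{j_0}>0$, so the bound $\mult_E\widetilde\Delta\leq 1$ forces $y_{j_0}$ to be bounded. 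For (iii): since the lc condition is blind to $(y_j)_{j\in J_1}$, the assignment $(\bfN_{J_1},(\Delta,\bfN_{J_2}))\mapsto (\Delta,\bfN_{J_1}+\bfN_{J_2})$ identifies $\bigl(\bigoplus_{j\in J_1}\RR_{\geq 0}\bfM_j\bigr)\oplus\ml_{IJ_2}$ with $\ml_{IJ}$; boundedness of $\ml_{IJ_2}$ follows from (ii) (controlling $y_j$ for $j\in J_2$) combined with the observation that reading the key identity along a strict transform $\widetilde B_i$, which is disjoint from $\Exc(\rho)$ and hence from every $F_j$, collapses it to $\mult_{\widetilde B_i}\widetilde\Delta=x_i\leq 1$.

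The only delicate step is the first paragraph, namely establishing the effective exceptional decomposition $\rho^*\bfM_{j,X}=\bfM_{j,\widetilde X}+F_j$ via the negativity lemma and characterizing when $F_j=0$. Once that is secured, the rest is bookkeeping on the single key identity.
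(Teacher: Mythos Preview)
Your proof is correct and follows essentially the same approach as the paper: both fix a model $\widetilde X$ to which all $\bfM_j$ descend, apply the Negativity Lemma to write $\rho^*\bfM_{j,X}=\bfM_{j,\widetilde X}+F_j$ with $F_j\geq 0$ exceptional (vanishing precisely when $\bfM_j$ descends to $X$), derive the identity $\widetilde\Delta=\widetilde\Delta_0+\sum_j y_jF_j$, and read off (i)--(iii) from it. One minor correction: in your argument for the boundedness of the $x_i$, the strict transform $\widetilde B_i$ need not be \emph{disjoint} from $\Exc(\rho)$; what you actually use (and what is true) is that $\widetilde B_i$ is not a \emph{component} of the exceptional divisor $F_j$, so $\mult_{\widetilde B_i}F_j=0$.
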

\begin{proof}
Fix a projective birational morphism $\rho\colon \widetilde X\rightarrow X$ such that all of the $\rmb$-divisors $\bfM_j$, $j\in J$ descend to $\widetilde X$, so the trace $\bfM_{j, \widetilde X}$ is a nef/$S$ divisor and $\bfM_j$ is the Cartier closure of  $\bfM_{j, \widetilde X}$. For $(\Delta, \bfN)\in V_{\geq 0}$, we can write
\begin{equation}\label{eq: pullback}
\rho^*(K_X+\Delta) = K_{\widetilde X} + {\widetilde \Delta}_0 \text{ and } \rho^*(K_X+\Delta + \bfN_X) = K_{\widetilde X} + {\widetilde \Delta} + \bfN_{\widetilde X}.
\end{equation}
where $ {\widetilde \Delta}_0$ and ${\widetilde \Delta}$ are uniquely determined $\RR$-divisors on $\widetilde X$ such that 
\[
\rho_* {\widetilde \Delta}_0 =\rho_* {\widetilde \Delta} =\Delta.
\]
The element $(\Delta, \bfN)$ lies in $\ml_{IJ}$ if and only if $(\widetilde X, {\widetilde \Delta})$ is sub-lc (see Section~\ref{sec: g pair}).

Since $\bfM_{j,\widetilde{X}}$ is nef over $S$, by the Negativity Lemma (\cite[Lemma~3.39]{KM98}), 
\begin{equation}\label{eq: neg lem}
\rho^{*}\bfM_{j,X}=\bfM_{j,\widetilde{X}}+E_j
\end{equation}
for some effective $\QQ$-divisor $E_j$, exceptional over $X$. Note that $E_j=0$ if and only if $\bfM_j$ descends to $X$. Substituting \eqref{eq: neg lem} into \eqref{eq: pullback} we infer that 
\begin{equation}\label{eq: Delta}
 {\widetilde \Delta}= {\widetilde \Delta}_0 + \sum_{j\in J} y_j E_j,
\end{equation}
where the $y_j$ ($j\in J$) are the (nonnegative) coefficients in the expression $\bfN=\sum_{j\in J} y_j \bfM_j$.

(i) For a subset $J'\subset J$ and $(\Delta, \bfN)\in \ml_{IJ}$ with $\bfN=\sum_j y_j\bfM_j$, the image of $(\Delta, \bfN)$ under $p_{J'}$ is $(\Delta, \bfN')$ with $\bfN' = \sum_{j\in J'}y_j \bfM_j$. Write
\begin{equation}\label{eq: pullback2}
\rho^*(K_X+\Delta + \bfN') = K_{\widetilde X} + {\widetilde \Delta}' + \bfN'_{\widetilde X}.
\end{equation}
Comparing this with \eqref{eq: pullback} and \eqref{eq: neg lem}, we obtain
\begin{equation}\label{eq: Delta'}
{\widetilde \Delta}= {\widetilde \Delta}' +\sum_{j\in J\backslash J'} y_j E_j.
\end{equation}
Since $(\widetilde X, {\widetilde \Delta})$ is a sub-lc pair and ${\widetilde \Delta}'\leq {\widetilde \Delta}$ by \eqref{eq: Delta'}, we infer that $(\widetilde X, {\widetilde \Delta}')$ is sub-lc. It follows that $(X/S, \Delta+\bfN')$ is lc by \eqref{eq: pullback2}, so $p_{J'}(\Delta, \bfN)\in \ml_{IJ'}$. On the other hand, any $(\Delta, \bfN')\in\ml_{IJ'}$ can be naturally viewed as an element of $\ml_{IJ}$ and it holds $p_{J'}(\Delta, \bfN') = (\Delta, \bfN')$. Thus we have proved that $p_{J'}(\ml_{IJ}) = \ml_{IJ'}$.

(ii) Fix $j_0\in J$. Suppose that the image $q_{j_0}(\ml_{IJ})$ is unbounded,  so $y_{j_0}$ is unbounded as $(\Delta, \bfN)\in\ml_{IJ}$ with $\bfN=\sum_j y_j \bfM_j$ varies. Since $(\widetilde X, {\widetilde \Delta})$ is sub-lc,  we have $\mult_E{\widetilde \Delta}\leq 1$ for any component $E$ of  ${\widetilde \Delta}$. In view of \eqref{eq: Delta}, this can happen only when $E_{j_0}=0$, so $\bfM_{j_0}$ descends to $X$.  

In the other direction, suppose that $\bfM_{j_0}$ descends to X. Then $(X/S, \Delta+\bfN)$ is lc if and only if $(X/S, \Delta+(\bfN + y_{j_0}'\bfM_{j_0}))$ is for any nonnegative number $y_{j_0}'$. It follows that the coefficient $y_{i_0}$ in $\bfN=\sum_{j\in J} y_j \bfM_j$ can be arbitrarily large, whence the unboundedness of $q_{j_0}(\ml(B,\bfM))$.

(iii) The boundedness of $\ml_{IJ_2}$ follows from (ii). The direct sum decomposition of (iii) follows from the observation that, $(\Delta, \bfN)\in V_{\geq 0}$ lies in  $\ml_{IJ}$ if and only if $(\Delta, (\bfN + \sum_{j\in J_1} y_j'\bfM_j))$ lies in $\ml_{IJ}$ for any nonnegative real numbers $y'_j$ with $j\in J_1$.
\end{proof}

A curve $\Gamma$ on $X$ is called \emph{extremal} if it generates an extremal ray $R$ of $\overline{\NE}(X/S)$ and if for some (equivalently, any) ample divisor $H$, we have $H\cdot\Gamma=\min\{H\cdot C\}$, where $C$ ranges over curves generating $R$. This definition of extremal curves is  slightly different from that in \cite[Section 3]{Birkar11}; we do not require the extremal ray to define a contraction.

The following existence of extremal curves as well as the bound on their lengths is crucial for the proofs of the main results in this section.
\begin{lem}\label{lem: length}
Let $S$ be a scheme. Let $(X/S, B+\bfM)$ be a lc generalized pair such that $X$ is $\QQ$-factorial klt. Then for any $(K_X+B+\bfM_X)$-negative extremal ray $R/S$, there exists an extremal curve $\Gamma$ generating $R$, and for any such $\Gamma$ it holds 
\[
-(K_X+B+\bfM_X)\cdot\Gamma\leq 2\dim X.
\]
\end{lem}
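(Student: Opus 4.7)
The strategy is to reduce the statement to the analogous length-of-extremal-rays bound for $\QQ$-factorial generalized klt pairs (part of the cone theorem for generalized pairs established, e.g., in \cite{HL18wzd}) by perturbing into the klt regime, exploiting the hypothesis that $X$ itself is $\QQ$-factorial klt.

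First I would fix a log resolution $\rho\colon\widetilde X\to X$ on which $\bfM$ descends, and for $\epsilon\in(0,1)$ consider the scaled generalized pair $(X/S,(1-\epsilon)(B+\bfM))$. From the identity
$$K_X+(1-\epsilon)(B+\bfM_X)=(1-\epsilon)(K_X+B+\bfM_X)+\epsilon K_X,$$
pulled back to $\widetilde X$, one checks that discrepancies scale by convex combinations,
$$a_E\bigl(X/S,(1-\epsilon)(B+\bfM)\bigr) = (1-\epsilon)\,a_E\bigl(X/S,B+\bfM\bigr) + \epsilon\, a_E(X,0),$$
for every geometric valuation $E$ over $X$. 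Since $X$ is $\QQ$-factorial klt, there is a uniform $\delta>0$ such that $a_E(X,0)>-1+\delta$ for all $E$ (as a finite minimum over components of the discrepancy divisor on $\widetilde X$). Combined with lc-ness of the original pair, this gives $a_E(X/S,(1-\epsilon)(B+\bfM))>-1+\epsilon\delta$, so the scaled pair is generalized klt. For $\epsilon$ small enough, the same identity also ensures that $K_X+(1-\epsilon)(B+\bfM_X)$ remains strictly negative on $R$ by continuity of the intersection pairing.

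I would then apply the cone theorem together with Kawamata's length bound for $\QQ$-factorial generalized klt pairs to the scaled pair, producing an extremal curve $\Gamma$ generating $R$ with
$$-(K_X+(1-\epsilon)(B+\bfM_X))\cdot\Gamma \leq 2\dim X.$$
The crucial observation is that any two extremal curves generating the same extremal ray $R$ have identical numerical classes in $N_1(X/S)$: if $\Gamma'$ is another such curve and $H$ is ample$/S$, then $[\Gamma]$ and $[\Gamma']$ both lie on the ray $R$ and realize the same minimum $H\cdot\Gamma = H\cdot\Gamma'$, which pins down the class uniquely. Consequently the intersection number with any $\RR$-Cartier divisor is independent of the choice of extremal curve generating $R$, so the displayed bound holds with $\Gamma$ replaced by any extremal curve in $R$.

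Finally, letting $\epsilon\to 0^+$ in the bound and using continuity of intersection numbers yields the desired inequality $-(K_X+B+\bfM_X)\cdot\Gamma\leq 2\dim X$, which proves both the existence assertion and the length bound. The step requiring the most care is verifying the convex-combination formula for discrepancies of a generalized pair under scaling, together with the uniform klt gap $\delta>0$ on $X$; once these are established, the cone-theorem input from \cite{HL18wzd} and the above limiting argument complete the proof.
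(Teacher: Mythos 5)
Your argument is correct but follows a genuinely different route from the paper. The paper simply invokes \cite[Proposition~3.13]{HL18wzd}, which already gives, for the lc generalized pair $(X/S,B+\bfM)$ (with $X$ $\QQ$-factorial klt), a curve $C$ generating $R$ with $(K_X+B+\bfM_X)\cdot C\ge -2\dim X$; it then defines $\Gamma$ by minimizing $H\cdot C$ over curves generating $R$ and concludes. By contrast, you reduce to the klt case by scaling to $(1-\epsilon)(B+\bfM)$, using that $X$ itself is klt (so the total discrepancy of $(X,0)$ is some $-1+\delta>-1$, verified on a single log resolution by \cite[Cor.~2.31--2.32]{KM98}), applying the klt cone theorem, and taking $\epsilon\to 0$. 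Both work; yours is a standard perturbation trick that buys independence from the lc length bound but uses the klt hypothesis on $X$ more essentially, while the paper's is shorter because the cited proposition already covers the lc case.

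One step you gloss over: the length bound produces \emph{some} curve $C_\epsilon$ generating $R$ with the bound, not automatically an extremal curve. Your observation that all extremal curves in $R$ share the same numerical class does not by itself transfer the bound from $C_\epsilon$ to $\Gamma$; what is needed is that since $\Gamma$ minimizes $H\cdot(-)$, one has $[\Gamma]=\lambda_\epsilon[C_\epsilon]$ with $\lambda_\epsilon\le1$, and since both sides of the length inequality are positive, the bound for $C_\epsilon$ implies the bound for $\Gamma$. The paper makes the analogous move implicitly. Once this is inserted, your limiting argument goes through: $\Gamma$ is fixed, $-(K_X+(1-\epsilon)(B+\bfM_X))\cdot\Gamma\le 2\dim X$ for all small $\epsilon>0$, and continuity in $\epsilon$ gives the conclusion.
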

\begin{proof}
By \cite[Proposition 3.13]{HL18wzd}, there is a curve $C$ generating $R$ such that $(K_X+B+\bfM_X)\cdot C\ge-2\dim X$. For a given ample Cartier divisor $H$, the (nonempty) set $\{H\cdot C\}_C$, where $C$ ranges over curves generating $R$, consists of positive integers.  It follows that $\{H\cdot C\}$ attains its minimum as, say, $H\cdot\Gamma$. Then $\Gamma$ is an extremal curve we are looking for.
\end{proof}

Now we can state the main result of this section:
\begin{prop}\label{prop: rat polytope}
Let $X/S, \{B_i\}_{i\in I}, \{\bfM_j\}_{j\in J}, V$ and $\ml_{IJ}$ be as in Set-up~\ref{su}. Assume furthermore that $X$ is klt.  Let $\{R_t\}_{t\in T}$ be a family  of extremal rays of $\overline{\NE}(X/S)$. For any subset $\mP$ of $\ml_{IJ}$,  define
\begin{equation}\label{eq: AN}
\anN_{T}(\mP)=\{(\Delta,\bfN)\in \mP\mid -(K_X+\Delta+\bfN_{X})\cdot R_t\ge0 \text{ for any } t\in T\}.
\end{equation}
Suppose that $\anN_{T}(\ml_{IJ})$ is nonempty.   Then the following holds.
\begin{enumerate}[leftmargin=*]
\item[(i)] For a given $j_0\in J$, let $q_{j_0}\colon V\rightarrow \RR\bfM_{j_0}$ be the projection to the direct summand. Then the image $q_{j_0}(\anN_{T}(\ml_{IJ}))$ is unbounded if and only if $\bfM_{
j_0} \text{ descends to } X \text{ and } \bfM_{j_0,X}\cdot R_t =0 \text{ for any } t\in T$.
\item[(ii)] Set $J'=\{j\in J \mid  \bfM_j \text{ descends to } X \text{ and } \bfM_{j,X}\cdot R_t =0 \text{ for any } t\in T\}$ and $J''=J\backslash J'$. Then $\anN_T(\ml_{IJ''})$ is a (nonempty) rational polytope and 
\begin{equation}\label{eq: decomp AN}
\anN_T(\ml_{IJ}) \cong  \left(\bigoplus_{j\in J'} \RR_{\geq 0} \bfM_j\right)\bigoplus \anN_T\left( \ml_{IJ''}\right).
\end{equation}
In particular, $\anN_T(\ml_{IJ}) $ is a rational polyhedral set, bounded if and only if $J'=\emptyset$.
\end{enumerate}
\end{prop}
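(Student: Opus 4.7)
The plan is to settle (i) first, because its boundedness estimates directly feed into (ii). I keep the notation and the partition $J=J_1\sqcup J_2$ from Lemma~\ref{lem: L} and its proof. For the ``if'' direction of (i), suppose $\bfM_{j_0}$ descends to $X$ and $\bfM_{j_0,X}\cdot R_t=0$ for all $t\in T$. Starting from any $(\Delta,\bfN)\in\anN_T(\ml_{IJ})$, adding $y\bfM_{j_0}$ to $\bfN$ for any $y\ge 0$ preserves the generalized lc condition by the descent argument of Lemma~\ref{lem: L}(ii), while every anti-nef inequality $-(K_X+\Delta+\bfN_X)\cdot R_t\ge0$ is unchanged by the orthogonality assumption; this forces unboundedness of $q_{j_0}(\anN_T(\ml_{IJ}))$. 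For the ``only if'' direction, unboundedness of $q_{j_0}(\anN_T(\ml_{IJ}))$ implies unboundedness of $q_{j_0}(\ml_{IJ})$, so Lemma~\ref{lem: L}(ii) already forces $\bfM_{j_0}$ to descend to $X$. Assuming for contradiction that $\bfM_{j_0,X}\cdot R_{t_0}>0$ for some $t_0\in T$, I rewrite the anti-nef inequality at $R_{t_0}$ as
\[
y_{j_0}\,\bfM_{j_0,X}\cdot R_{t_0}\;\le\;-(K_X+\Delta)\cdot R_{t_0}\;-\;\sum_{j\neq j_0}y_j\,\bfM_{j,X}\cdot R_{t_0}.
\]
The right-hand side is uniformly bounded above independently of $y_{j_0}$: the $B_i$-coefficients of $\Delta$ lie in $[0,1]$ by lc, the summands with $j\in J_1\setminus\{j_0\}$ are non-positive since $\bfM_{j,X}$ is nef, and the summands with $j\in J_2$ are bounded because $y_j$ is bounded by Lemma~\ref{lem: L}(iii). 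Hence $y_{j_0}$ is bounded, a contradiction.

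For (ii), the direct sum decomposition is formal once (i) is in hand. Any $(\Delta,\bfN)=(\Delta,\bfN'+\bfN'')$ with $\bfN'=\sum_{j\in J'}y_j\bfM_j$ and $\bfN''=\sum_{j\in J''}y_j\bfM_j$ satisfies $-(K_X+\Delta+\bfN_X)\cdot R_t=-(K_X+\Delta+\bfN''_X)\cdot R_t$ for every $t\in T$ (because $\bfM_{j,X}\cdot R_t=0$ for $j\in J'$), and the generalized lc condition for $(\Delta,\bfN'')$ is equivalent to that for $(\Delta,\bfN)$ by applying Lemma~\ref{lem: L}(ii) to each $j\in J'$. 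Nonemptiness of $\anN_T(\ml_{IJ''})$ is inherited from $\anN_T(\ml_{IJ})$ via the projection $p_{J''}$ of Lemma~\ref{lem: L}(i). Boundedness of $\anN_T(\ml_{IJ''})$ then follows from (i) combined with Lemma~\ref{lem: L}(iii): the $B_i$-coefficients are bounded by lc, the $J''\cap J_1$ coefficients are bounded by the ``only if'' direction of (i) (since each such $\bfM_j$ has positive intersection with some $R_t$ by the very definition of $J''$), and the $J''\cap J_2$ coefficients are bounded because $\ml_{I,J''\cap J_2}\subseteq \ml_{IJ_2}$ is bounded.

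The remaining step is that $\anN_T(\ml_{IJ''})$ is a rational polytope. By construction it is the bounded intersection of the rational polyhedral set $\ml_{IJ''}$ with the family of closed half-spaces $H_t^+=\{(\Delta,\bfN):-(K_X+\Delta+\bfN_X)\cdot R_t\ge 0\}$, $t\in T$. The essential and hardest step is to show that only finitely many of the $H_t^+$ are active and that their defining hyperplanes are rational with respect to the basis $\{(B_i,0)\}_i\cup\{(0,\bfM_j)\}_j$. I plan to carry this out following the pattern of \cite[Section~3]{HL18wzd} and \cite[Section~3]{Birkar11}: use the klt hypothesis on $X$ together with the cone theorem to localize the analysis to $(K_X+\Delta+\bfN_X)$-negative extremal rays, whose numerical classes are rational, and use the length bound of Lemma~\ref{lem: length} to uniformly control the extremal curves $\Gamma_t$ generating active walls as $(\Delta,\bfN)$ varies in the bounded set $\anN_T(\ml_{IJ''})$. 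A natural implementation is by induction on the dimension of the face of $\ml_{IJ''}$ one is working in: on each face one reduces to a finite rational subfamily $T_0\subset T$ cutting out the same polytope, after which rationality of the walls is inherited from the rationality of the basis. This finiteness-plus-rationality step is the main obstacle of the proof; the rest is largely bookkeeping built on (i) and Lemma~\ref{lem: L}.
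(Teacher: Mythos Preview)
Your argument for (i) is correct and is actually cleaner than the paper's. Fixing any generator $\gamma$ of $R_{t_0}$, you bound the right-hand side of your displayed inequality by elementary means: lc forces the $B_i$-coefficients into $[0,1]$; for $j\in J_1\setminus\{j_0\}$ the trace $\bfM_{j,X}$ is nef so those terms are $\le 0$; for $j\in J_2$ the coefficients $y_j$ are bounded by Lemma~\ref{lem: L}(iii). The paper instead invokes the length bound of Lemma~\ref{lem: length} together with the Cartier index of $\bfM_{j_0,X}$ to reach the contradiction. Your route avoids Lemma~\ref{lem: length} (and hence the klt hypothesis) at this stage altogether, which is a genuine simplification.

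Your decomposition and boundedness arguments for (ii) are correct and match the paper. The gap is exactly where you flag it, but your proposed mechanism---``reduce to a finite rational subfamily $T_0\subset T$''---is not how the paper proceeds and is not obviously available: the index set $T$ can be genuinely infinite with infinitely many distinct active hyperplanes, and nothing in your sketch produces the finite $T_0$. The paper's argument (Lemma~\ref{lem: bounded rational}, prepared by Lemma~\ref{lem: extremal curve}) works differently. One first sandwiches $\anN_T(\ml_{IJ''})\subset\mP\subset\ml_{IJ''}$ with $\mP$ a rational polytope, so $\anN_T(\ml_{IJ''})=\anN_T(\mP)$. After normalizing so that every $R_t$ is $(K_X+B+\bfM_X)$-negative at a fixed relative interior point $(B,\bfM)$, the discreteness estimate Lemma~\ref{lem: extremal curve}(i) and the uniform perturbation bound Lemma~\ref{lem: extremal curve}(iv) yield a finite open cover $\{\mathcal B_k\}$ of $\anN_T(\mP)$ and (still possibly infinite) subfamilies $T_k$ with $T=\bigcup_k T_k$, such that for each $k$ there exists a \emph{rational} point $(\Delta',\bfN')\in\mP$ on which $(K_X+\Delta'+\bfN'_X)\cdot\Gamma_t=0$ for all $t\in T_k$. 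One then shows $\anN_{T_k}(\mP)$ is the convex hull of this rational point and $\anN_{T_k}(\partial\mP)$, and concludes by induction on $\dim\mP$. The two ideas your sketch is missing are this covering/centering step (producing the rational ``apex'') and the convex-hull-with-boundary reduction; the induction on faces by itself does not manufacture the rational point you need.
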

The proof of Proposition~\ref{prop: rat polytope} will be given in the end of this section after some preparation. Before that, we draw a consequence.
\begin{cor}\label{cor: rat decomp}
Let $X/S, \{B_i\}_{i\in I}, \{\bfM_j\}_{j\in J}$ and $\ml_{IJ}$ be as in Set-up~\ref{su}. Assume furthermore that $X$ is klt.  Suppose that there is an element $(B, \bfM)\in \ml_{IJ}$ such that $-(K_X+B+\bfM_{X})$ is nef over $S$. Then there are finitely many elements $(\Delta^{(k)}, \bfN^{(k)})\in \ml_{IJ}$ with rational coefficients, and $c_k \in \Rr_{>0}$ with $\sum_{k} c_k=1$ such that 
\begin{enumerate}[leftmargin=*]
\item[(i)] $K_X+B+\bfM_X=\sum_k c_k (K_X+\Delta^{(k)}+\bfN^{(k)}_{X})$, and
\item[(ii)] $-(K_X+\Delta^{(k)}+\bfN^{(k)}_{X})$ is nef for each $k$.
\end{enumerate}
\end{cor}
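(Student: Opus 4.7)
The plan is to realise $(B,\bfM)$ as a convex combination of rational anti-nef log canonical points by exploiting the rational polyhedral structure furnished by Proposition~\ref{prop: rat polytope}. First, I would take $T$ to be the set of all extremal rays of $\overline{\NE}(X/S)$, so that $\anN_T(\ml_{IJ})$ is precisely the locus of $(\Delta,\bfN)\in\ml_{IJ}$ with $-(K_X+\Delta+\bfN_X)$ nef over $S$. The hypothesis places $(B,\bfM)$ in this set, so it is nonempty and the proposition applies: $\anN_T(\ml_{IJ})$ is a rational polyhedral subset of $V$, and it decomposes as
\[
\anN_T(\ml_{IJ}) \cong \Bigl(\bigoplus_{j\in J'}\RR_{\geq 0}\bfM_j\Bigr) \oplus \anN_T(\ml_{IJ''}),
\]
with $\anN_T(\ml_{IJ''})$ a bounded rational polytope and $J',J''$ as in Proposition~\ref{prop: rat polytope}(ii).

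Using this decomposition, I would write $(B,\bfM) = (B_0,\bfM_0) + \sum_{j\in J'}\lambda_j(0,\bfM_j)$ with $(B_0,\bfM_0)\in\anN_T(\ml_{IJ''})$ and $\lambda_j\geq 0$. The polytope $\anN_T(\ml_{IJ''})$ has finitely many rational vertices $v_1,\dots,v_r$, so there exist $c_k\geq 0$ with $\sum_k c_k=1$ such that $(B_0,\bfM_0) = \sum_k c_k v_k$; the $c_k$ may be irrational, but this is allowed since the statement does not require rational convex weights. After relabelling, assume $c_1>0$. For each $j\in J'$ with $\lambda_j>0$, pick a rational number $N_j$ with $\sum_{j\in J'}\lambda_j/N_j < c_1$ and set $q_j := v_1 + N_j(0,\bfM_j)$; this is a rational point of $\anN_T(\ml_{IJ})$ by the displayed decomposition. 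Substituting $\lambda_j(0,\bfM_j) = (\lambda_j/N_j)(q_j - v_1)$ and regrouping yields
\[
(B,\bfM) = \Bigl(c_1 - \sum_{j\in J'}\lambda_j/N_j\Bigr) v_1 + \sum_{k=2}^r c_k v_k + \sum_{j\in J'}(\lambda_j/N_j) q_j,
\]
which, after discarding any zero coefficients, is a convex combination of rational points of $\anN_T(\ml_{IJ})\subseteq\ml_{IJ}$ with coefficients summing to $1$. Applying the affine map $(\Delta,\bfN)\mapsto K_X+\Delta+\bfN_X$ produces the identity in (i), and (ii) is automatic because every chosen point lies in $\anN_T(\ml_{IJ})$ by construction.

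The real content is packaged into Proposition~\ref{prop: rat polytope}; given that input, the argument is a standard Minkowski--Weyl manipulation. The one mildly delicate point, which I expect to be the main obstacle to a clean write-up, is the passage from the natural convex-plus-conic decomposition to an honest convex combination of rational points when some $\lambda_j$ happen to be irrational. The trick of truncating each ray generator at a large rational scalar $N_j$ and absorbing the excess via a corrective shift of a chosen vertex $v_1$ handles this cleanly, provided $c_1>0$, which can always be arranged.
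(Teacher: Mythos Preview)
Your proposal is correct and follows the same route as the paper: take $T$ to be all extremal rays of $\overline{\NE}(X/S)$, invoke Proposition~\ref{prop: rat polytope} to see that $\anN_T(\ml_{IJ})$ is a rational polyhedral set containing $(B,\bfM)$, and read off a convex combination of rational points. The paper simply asserts that such a rational convex decomposition exists once the set is known to be rational polyhedral, whereas you spell out the Minkowski--Weyl manipulation explicitly; this extra detail is fine but not required.
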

\begin{proof}
Let $\{R_t\}_{t\in T}$ be the family of all extremal rays of  $\overline{\NE}(X/S)$. Then $\anN_{T}(\ml_{IJ})$ contains $(B, \bfM)$, since $-(K_X+B+\bfM_{X})$ is nef over $S$. By Proposition~\ref{prop: rat polytope}, $\anN_{T}(\ml_{IJ})$ is a rational polyhedral set. It follows that there are elements $(\Delta^{(k)}, \bfN^{(k)})\in \anN_{T}(\ml_{IJ})\subset\ml_{IJ}$ with rational coefficients, and $c_k \in \Rr_{>0}$ such that $\sum_{k} c_k=1$,  $(B, \bfM)=\sum_k c_k (\Delta^{(k)}, \bfN^{(k)})$. It is then clear that the $(\Delta^{(k)}, \bfN^{(k)})$  satisfy  (i) and (ii) of the conclusion.
\end{proof}

Now we start the preparation for the proof of Proposition~\ref{prop: rat polytope}.
\begin{lem}\label{lem: extremal curve} Let $X/S, \{B_i\}_{i\in I}, \{\bfM_j\}_{j\in J}$ and $\ml_{IJ}$ be as in Set-up~\ref{su}. Assume furthermore that $X$ is klt. Fix an element $(B, \bfM) \in \ml_{IJ}$. Then the following holds.
\begin{enumerate}[leftmargin=*]
\item[(i)] There exists a real number $\alpha>0$, depending only on $(B, \bfM)$, such that  for any extremal curve $\Gamma$ with $(K_X+B+\bfM_X)\cdot \Gamma< 0$ we have $(K_X+B+\bfM_X)\cdot \Gamma< -\alpha$. 
\end{enumerate}
Let $\mP\subset \ml_{IJ}$ be a rational polyhedral set such that $(B, \bfM) $ is a relative interior point of $\mP$.
\begin{enumerate}[leftmargin=*]
\item[(ii)]  There exists a constant $\delta>0$, depending only on $(B, \bfM) $, such that for any extremal curve $\Gamma$ with $(K_X+B+\bfM_X)\cdot \Gamma< 0$ and for any $(\Delta,\bfN)\in\mP$ with $||(\Delta-B,\bfN-\bfM)||<\delta$, we have $(K_X+\Delta+\bfN_{X})\cdot \Gamma< 0$.
\item[(iii)] There is a constant $\beta>0$, depending only on $(B, \bfM) $, such that for any two elements $(\Delta', \bfN')\neq (\Delta'', \bfN'')\in \mP$ and for any extremal curve $\Gamma$ with $(K_X+B+\bfM_X)\cdot \Gamma< 0$, we have 
\[
\left|\frac{( \Delta'  + \bfN_{X}') -(\Delta''+ \bfN''_{X})}{||(\Delta'-\Delta'', \bfN' - \bfN'')||}\cdot\Gamma\right|< \beta.
\] 
\item[(iv)] Let $(\Delta', \bfN')\in\mP$. Then there is a constant $\delta'>0$, depending only on $(\Delta', \bfN')$ and $(B, \bfM)$, such that for any extremal curve $\Gamma$ with $(K_X+\Delta'+\bfN'_{X})\cdot \Gamma< 0$ and  $(K_X+B+\bfM_X)\cdot \Gamma< 0$, and for any $(\Delta'',\bfN'')\in\mP$ with $||(\Delta''-\Delta',\bfN''-\bfN')||<\delta'$, we have $(K_X+\Delta''+\bfN_{X}'')\cdot \Gamma< 0$.

\end{enumerate} 
\end{lem}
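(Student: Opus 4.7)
The plan is to establish (i) and (iii) directly, and then to derive (ii) and (iv) by combining them.

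For (i), I would use the rational polyhedrality of $\ml_{IJ}$ (cf.~\cite[Section~3.3]{HL18wzd}) to write $(B, \bfM) = \sum_{k=1}^r c_k (\Delta^{(k)}, \bfN^{(k)})$ as a convex combination with $c_k \in \RR_{>0}$, $\sum_k c_k = 1$, and each $(\Delta^{(k)}, \bfN^{(k)}) \in \ml_{IJ}$ having rational coefficients, so that each pair $(X/S, \Delta^{(k)} + \bfN^{(k)})$ is lc. Picking an integer $m > 0$ such that $m(K_X + \Delta^{(k)} + \bfN^{(k)}_X)$ is Cartier for every $k$, each intersection $a^{(k)} := (K_X + \Delta^{(k)} + \bfN^{(k)}_X) \cdot \Gamma$ lies in $\frac{1}{m}\ZZ$. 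For an extremal curve $\Gamma$ with $(K_X + B + \bfM_X) \cdot \Gamma < 0$, Lemma~\ref{lem: length} applied to $(X/S, B + \bfM)$ gives $\sum_k c_k a^{(k)} \geq -2\dim X$. For each $k$ with $a^{(k)} < 0$, the curve $\Gamma$ is also $(K_X + \Delta^{(k)} + \bfN^{(k)}_X)$-negative extremal, and Lemma~\ref{lem: length} applied to $(X/S, \Delta^{(k)} + \bfN^{(k)})$ yields $a^{(k)} \geq -2\dim X$. For each $k$ with $a^{(k)} > 0$, isolating $c_k a^{(k)}$ in the above sum produces $c_k a^{(k)} < 2\dim X$, hence $a^{(k)} < 2\dim X / c_k$. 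Thus each $a^{(k)}$ takes values in a bounded subset of $\frac{1}{m}\ZZ$, i.e.~in a finite set, and so $(K_X + B + \bfM_X) \cdot \Gamma = \sum_k c_k a^{(k)}$ ranges over a finite set, whose maximum over negative values is some number $-2\alpha < 0$; this $\alpha > 0$ does the job. I expect the crux of this step to be the a priori unboundedness of the positive $a^{(k)}$, which is handled by the sign-cancellation argument just described.

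For (iii), since $\mP$ is a rational polyhedral set and $(B, \bfM)$ is a relative interior point of $\mP$, there exists $\epsilon > 0$ such that $(B, \bfM) \pm \epsilon u \in \mP$ for every unit vector $u$ in the affine hull direction of $\mP$. Given $(\Delta', \bfN') \neq (\Delta'', \bfN'') \in \mP$, set $r = \|(\Delta' - \Delta'', \bfN' - \bfN'')\|$ and $u = r^{-1}((\Delta' - \Delta''), (\bfN' - \bfN''))$, and write $\tau$ for the quantity whose absolute value appears on the left of (iii), so that the lc generalized pair $(B, \bfM) + \epsilon u \in \ml_{IJ}$ has associated intersection $(K_X + B + \bfM_X) \cdot \Gamma + \epsilon \tau$ against $\Gamma$. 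For a $(K_X + B + \bfM_X)$-negative extremal curve $\Gamma$, either this intersection is nonnegative, forcing $\tau > 0$, or Lemma~\ref{lem: length} bounds it below by $-2\dim X$, yielding $\tau \geq -2\dim X / \epsilon$. Running the same argument with $-u$ in place of $u$ gives the matching upper bound $\tau \leq 2\dim X / \epsilon$, so any $\beta > 2\dim X / \epsilon$ satisfies (iii).

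Finally, (ii) and (iv) follow by combining (i) and (iii). For (ii), split $(K_X + \Delta + \bfN_X) \cdot \Gamma = (K_X + B + \bfM_X) \cdot \Gamma + ((\Delta - B) + (\bfN_X - \bfM_X)) \cdot \Gamma$: the first summand is $< -\alpha$ by (i), and by (iii) applied to the pair $(\Delta, \bfN), (B, \bfM) \in \mP$ the second has absolute value $< \beta \|(\Delta - B, \bfN - \bfM)\| < \beta \delta$; choosing $\delta = \alpha / (2\beta)$ forces the sum to be strictly negative. For (iv), one first applies (i) to $(\Delta', \bfN') \in \ml_{IJ}$ to obtain $\alpha' > 0$ such that $(K_X + \Delta' + \bfN'_X) \cdot \Gamma < -\alpha'$ for every $(K_X + \Delta' + \bfN'_X)$-negative extremal curve $\Gamma$, then invokes (iii) to control the perturbation from $(\Delta', \bfN')$ to a nearby $(\Delta'', \bfN'') \in \mP$; setting $\delta' = \alpha' / (2\beta)$ then suffices.
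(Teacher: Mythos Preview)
Your proof is correct and uses the same ingredients as the paper --- the rational polyhedrality of $\ml_{IJ}$, the length bound of Lemma~\ref{lem: length}, and the relative-interior hypothesis --- but organizes them in a different order. The paper proves (ii) first, by a contradiction argument that tracks the affine function $t\mapsto (K_X+\Delta_t+\bfN_{t,X})\cdot\Gamma$ out to a point at distance $d$ from $(B,\bfM)$ on $\partial\mP$ and invokes the length bound there; it then deduces (iii) from (ii) by perturbing $(B,\bfM)$ by $\delta/2$ in the given direction, yielding $\beta=4\dim X/\delta$. You instead establish (iii) directly, moving $(B,\bfM)$ by the relative-interior radius $\epsilon$ in both directions $\pm u$ and applying Lemma~\ref{lem: length} with a sign case-split; (ii) then drops out of (i) and (iii) via the obvious splitting of $(K_X+\Delta+\bfN_X)\cdot\Gamma$. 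Your ordering is a bit cleaner conceptually, making (iii) logically prior and (ii) an immediate corollary, whereas the paper's route yields an explicit $\delta$ in (ii) that does not pass through (iii). For (i) and (iv) the two arguments are essentially identical.
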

\begin{proof}
(i) By Lemma~\ref{lem: L}, $\ml_{IJ}$ is a rational polyhedral set, so there exist elements $(\Delta^{(1)}, \bfN^{(1)}), \dots, (\Delta^{(r)}, \bfN^{(r)})\in\ml_{IJ}$ with rational coefficients and real numbers $c_k$ ($1\leq k\leq r$) such that 
\[
\sum_{1\leq k\leq r} c_k=1 \text{ and } K_X+B+\bfM_X=\sum_{1\leq k\leq r} c_k(K_X+\Delta^{(k)}+\bfN^{(k)}_{X}).
\] Since $X$ is $\QQ$-factorial, there is a positive integer $N$ such that $N(K_X+\Delta^{(k)}+\bfN^{(k)}_{X})$ is Cartier for all $1\leq k\leq r$. 

Let $\Gamma$ be an extremal curve such that  $(K_X+B+\bfM_X)\cdot \Gamma< 0$. Then, for any $1\le k_0\le r$,
	\begin{equation}\label{eq: bound 1}
	\begin{split}
	0&>(K_X+B+\bfM_X)\cdot \Gamma\\
	&=\sum_{1\leq k\leq r} c_k(K_X+\Delta^{(k)}+\bfN^{(k)}_{X})
	\cdot \Gamma\\
	&\ge -\sum_{k\neq k_0}c_k\cdot 2\dim X+c_{k_0}(K_X+\Delta^{(k_0)}+\bfN^{(k_0)}_X)\cdot \Gamma\\
	&\ge -2\dim X+c_{k_0}(K_X+\Delta^{(k_0)}+\bfN^{(k_0)}_X)\cdot \Gamma.
	\end{split}
	\end{equation}
where the second inequality follows from Lemma~\ref{lem: length} and the last inequality follows from fact that $\sum_{k\neq k_0}c_k\leq \sum_{k}c_k =1$. By \eqref{eq: bound 1} and Lemma~\ref{lem: length} we obtain
\begin{equation}\label{eq: bound 2}
-2c_{k_0}\dim X \leq c_{k_0} (K_X+\Delta^{(k_0)}+\bfN^{(k_0)}_X) \cdot \Gamma< 2\dim X.
\end{equation}
Since $N(K_X+\Delta^{(k_0)}+\bfN^{(k_0)}_X)$ is Cartier, $c_{k_0}(K_X+\Delta^{(k_0)}+\bfN^{(k_0)}_X) \cdot \Gamma$ is contained in $(c_{k_0}/N)\ZZ$; combining this fact with \eqref{eq: bound 2} we infer that there are only finitely many possibilities for the numbers $c_{k_0}(K_X+\Delta^{(k_0)}+\bfN^{(k_0)}_X)\cdot \Gamma$, and hence for $(K_X+B+\bfM_{X})\cdot \Gamma$. It follows also that there is some $\alpha>0$, which depends only on $(B,\bfM)$, such that $(K_X+B+\bfM_X)\cdot \Gamma<-\alpha$.

(ii)  If $\dim \mP=0$ then $\mP = \{(B, \bfM)\}$ and there is nothing to prove.  So we can assume that $\dim \mP>0$. Since $(B, \bfM)$ is a relative interior point of $\mP$, the number $$d =\min\{\dist((B, \bfM), \partial\mP),1\} $$ is positive, where $\dist((B, \bfM), \partial\mP)$ denotes the distance between $(B, \bfM)$ and the relative boundary $\partial\mP$. Let $$\delta=\frac{\alpha d}{2\dim X},$$ where $\alpha$ is as in (i).  Suppose on the contrary that $(K_X+B+\bfM_X)\cdot \Gamma<0$ while $(K_X+\Delta+\bfN_{X})\cdot \Gamma\ge0$ for some $(\Delta,\bfN)\in \mP$ with $||(\Delta-B, \bfN-\bfM)||<\delta$. 
Then $(\Delta_t,\bfN_t):=(B, \bfM)+t\frac{(\Delta-B, \bfN-\bfM)}{||(\Delta-B, \bfN-\bfM)||}$ is contained in $\mP$ for $-d\leq t\leq ||(\Delta-B, \bfN-\bfM)||$. Note that the function $\varphi(t)=(K_X+\Delta_t+\bfN_{t,X})\cdot\Gamma$ is affine in $t$ and we have
\begin{equation}\label{eq: phi}
\begin{split}
&\varphi(||(\Delta-B, \bfN-\bfM)||) = (K_X+\Delta+\bfN_{X})\cdot \Gamma\ge0 \\
&\varphi(0) =  (K_X+B+\bfM_X)\cdot \Gamma<0.
\end{split}
\end{equation}
It follows that there is some $0<t_0\leq ||(\Delta-B, \bfN-\bfM)||$ such that $\varphi(t_0)=0$. Therefore, 
	\begin{multline*}
	(K_X+\Delta_{-d}+\bfN_{-d,X})\cdot \Gamma = \varphi(-d)
	=  \frac{d+t_0}{t_0}\varphi(0) 
	=\frac{d+t_0}{t_0}(K_X+B+\bfM_X)\cdot \Gamma \\
	 < \frac{d+\delta}{\delta} (K_X+B+\bfM_X)\cdot \Gamma
	 <-\left(1+\frac{2\dim X}{\alpha}\right) \alpha < -2\dim X,
	\end{multline*}	
where the equalities are by the definition and the affineness of $\varphi$, the first inequality is because $0<t_0<\delta$ and $ (K_X+B+\bfM_X)\cdot \Gamma<0$, and the second inequality is by the definitions of $\alpha$ and $\delta$. However, the above overall inequality contradicts Lemma~\ref{lem: length}.

(iii) Take $$\beta=\frac{4\dim X}{\delta},$$ where $\delta$ is as in (ii). First we deal with the special case where $(\Delta' , \bfN')=(B, \bfM)$. For any element $(\Delta'', \bfN'')\neq (B, \bfM)\in \mP$ and for any extremal curve $\Gamma$ with $(K_X+B+\bfM_X)\cdot \Gamma< 0$, we have by (ii) that 
\begin{equation}\label{eq: neg}
\left(K_X+B+\bfM_X+ \frac{\delta}{2} \frac{[( \Delta''  + \bfN_{X}'') -(B+\bfM_X)]}{||(\Delta''-B, \bfN'' - \bfM)||}\right)\cdot \Gamma <0.
\end{equation}
Plugging the bound on the lengths of extremal curves $(K_X+B+ \bfM_X)\cdot\Gamma\geq -2\dim X$ of Lemma~\ref{lem: length} into \eqref{eq: neg}, we obtain
\begin{equation}\label{eq: bound 3}
 \frac{[( \Delta''  + \bfN_{X}'') -(B+\bfM_X)]}{||(\Delta''-B, \bfN'' - \bfM)||}\cdot \Gamma < \frac{4\dim X}{\delta} =\beta.
\end{equation}
Replacing $( \Delta'' , \bfN'') $ in \eqref{eq: bound 3} with $(B, \bfM) - \epsilon((\Delta'', \bfN'') - (B, \bfM))$, which still lies in $\mP$ for $0<\epsilon\ll 1$,
we obtain 
\begin{equation}\label{eq: bound 4}
-  \frac{[( \Delta''  + \bfN_{X}'') -(B+\bfM_X)]}{||(\Delta''-B, \bfN'' - \bfM)||}\cdot \Gamma < \beta.
\end{equation}
Combining \eqref{eq: bound 3} and \eqref{eq: bound 4} we have the required bound on the absolute value of the intersection number
\[
 \left|  \frac{[( \Delta''  + \bfN_{X}'') -(B+\bfM_X)]}{||(\Delta''-B, \bfN'' - \bfM)||}\cdot \Gamma\right| <\beta.
\]

Now for any two distinct elements $(\Delta' , \bfN'), (\Delta'' , \bfN'') \in \mP$, since $(B, \bfM)$ is a relative interior point of $\mP$, there exists $(\Delta''', \bfN''')\neq(B, \bfM)\in \mP$ such that 
\[
(\Delta''' -B, \bfN'''-\bfM) = \epsilon(\Delta'' -\Delta', \bfN''-\bfN')
\]
for some $0<\epsilon\ll 1$,
so we reduce to the special case which has already been handled.

(iv) By (i) there is a constant $\alpha'>0$, depending only on $(\Delta', \bfN')$, such that if $(K_X+\Delta'+\bfN_{X}')\cdot \Gamma< 0$ for some extremal curve $\Gamma$ then $(K_X+\Delta'+\bfN_{X}')\cdot \Gamma< -\alpha'$. Now set $$ \displaystyle \delta'  = \frac{\alpha'}{\beta},$$  where $\beta$ is as in (iii). Then  for any extremal curve $\Gamma$ with $(K_X+\Delta'+\bfN'_{X})\cdot \Gamma< 0$ and  $(K_X+B+\bfM_X)\cdot \Gamma< 0$, and for any $(\Delta'',\bfN'')\in\mP$ with $||(\Delta''-\Delta',\bfN'-\bfN')||<\delta'$,  we have
\begin{align*}
&(K_X+\Delta''+\bfN''_{X}) \cdot \Gamma \\
 =& (K_X+\Delta'+\bfN'_{X}) \cdot \Gamma + [(\Delta''+ \bfN''_X) -(\Delta'+ \bfN'_X)]\cdot \Gamma\\
\leq& (K_X+\Delta'+\bfN'_{X}) \cdot \Gamma + ||(\Delta'', \bfN'') -(\Delta', \bfN')||\cdot \left|\frac{[(\Delta''+ \bfN''_X) -(\Delta'+ \bfN'_X)]}{||(\Delta'', \bfN'') -(\Delta', \bfN')||}\cdot \Gamma\right | \\
<& - \alpha' + \delta'\beta= 0.
\end{align*}
where we use (iii) for the second inequality. 
\end{proof}

\begin{lem}\label{lem: bounded rational}
Let $X/S, \{B_i\}_{i\in I}, \{\bfM_j\}_{j\in J}, V$ and $\ml_{IJ}$ be as in Set-up~\ref{su}. Assume furthermore that $X$ is klt.  For any family $\{R_t\}_{t\in T}$ of extremal rays of $\overline{\NE}(X/S)$ and for any  rational polytope $\mP$ contained in $\ml_{IJ}$,  the subset $\anN_{T}(\mP)$, defined as in Proposition~\ref{prop: rat polytope}, is a rational polytope.
\end{lem}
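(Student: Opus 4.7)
The plan is to argue by induction on $d := \dim \mP$; the case $d = 0$ is trivial since $\mP$ is a point and $\anN_T(\mP)$ is either $\mP$ or empty. For the inductive step, for each $t \in T$ we fix an extremal curve $\Gamma_t$ generating $R_t$ and define the affine-linear functional $f_t(\Delta, \bfN) := -(K_X + \Delta + \bfN_X) \cdot \Gamma_t$. Since $X$ is $\QQ$-factorial, every $B_i$ and each trace $\bfM_{j,X}$ is $\QQ$-Cartier, so $f_t$ has rational coefficients with respect to the distinguished basis of $V$, and by definition $\anN_T(\mP) = \mP \cap \bigcap_{t \in T} \{f_t \geq 0\}$. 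We may assume $\anN_T(\mP) \neq \emptyset$ and split into two cases.

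If $\anN_T(\mP) \subset \partial \mP$, then as a convex subset of the relative boundary of a polytope, $\anN_T(\mP)$ is contained in a single proper face $F$ of $\mP$ (a standard fact: near a relative interior point of any face $F'$ the local part of $\partial \mP$ lies in $\mathrm{aff}(F') \cap \mP = F'$, and convexity then propagates to the whole of $\anN_T(\mP)$). Hence $\anN_T(\mP) = \anN_T(F)$ is a rational polytope by the inductive hypothesis applied to $F$, which has dimension strictly less than $d$. Otherwise, we fix $(B_0, \bfM_0) \in \anN_T(\mP) \cap \mathrm{int}(\mP)$ and partition $T = T_a \sqcup T_n$ according to whether $f_t(B_0, \bfM_0) = 0$ (active) or $> 0$ (non-active). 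Lemma~\ref{lem: extremal curve}(i)--(ii) applied at $(B_0, \bfM_0)$ produces $\delta > 0$ such that every non-active $f_t$ remains strictly positive on $U_0 := \mP \cap B_\delta((B_0, \bfM_0))$, so on $U_0$ the set $\anN_T(\mP)$ is carved out purely by the active family $\{f_t : t \in T_a\}$.

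The global conclusion will be drawn by a compactness argument on $\mP$: for every $(B, \bfM) \in \mP$ we plan to exhibit an open neighborhood $U_{(B, \bfM)}$ in $\mP$ and a finite subset $T_{(B, \bfM)} \subset T$ with $U_{(B, \bfM)} \cap \anN_T(\mP) = U_{(B, \bfM)} \cap \anN_{T_{(B, \bfM)}}(\mP)$. This is immediate by continuity of some $f_{t_0}$ with $f_{t_0}(B, \bfM) < 0$ when $(B, \bfM) \notin \anN_T(\mP)$; it follows from the inductive hypothesis applied to the carrier face when $(B, \bfM) \in \partial \mP$; and at an interior active point it reduces, via the reduction of the previous paragraph, to showing that the active constraints need only finitely many representatives locally, which we establish by combining the inductive hypothesis on the slices $\mP \cap \{f_{t_1} = 0\}$ of dimension $d - 1$ (for $t_1 \in T_a$) with the uniform gradient bound of Lemma~\ref{lem: extremal curve}(iii). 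Extracting a finite subcover and taking $T_0 := \bigcup_i T_{(B_i, \bfM_i)}$ gives a finite $T_0 \subset T$ with $\anN_T(\mP) = \anN_{T_0}(\mP)$, a rational polytope cut from $\mP$ by finitely many rational halfspaces.

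The hard part will be the local finiteness of the active constraints at an interior point of $\anN_T(\mP)$, since a priori $T_a$ may be infinite and the gradients $\nabla f_t$ for $t \in T_a$ have no evident direct bound from the earlier lemmas (which control only the non-active rays). Overcoming this obstacle is the crux of the argument, and is expected to require the delicate coupling of the lower-dimensional polyhedral structure coming from the inductive hypothesis on slices with the uniform estimates of Lemma~\ref{lem: extremal curve} transported to the interior via a rational rescaling of $\mP$ around $(B_0, \bfM_0)$.
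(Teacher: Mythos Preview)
Your proposal has two gaps, one minor and one decisive.

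The minor one: you fix an extremal curve $\Gamma_t$ for every $t\in T$ at the outset. But Lemma~\ref{lem: length} only yields an extremal curve on $R_t$ when $R_t$ is $(K_X+\Delta+\bfN_X)$-negative for \emph{some} lc generalized pair; an arbitrary extremal ray of $\overline{\NE}(X/S)$ need not contain a curve at all. The paper earns this: after discarding the $t$ with $\anN_{\{t\}}(\mP)=\mP$, it argues (step \textcircled{b}) that each remaining $R_t$ is negative at some point of $\mP$, and only then invokes Lemma~\ref{lem: length}.

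The decisive gap is the one you yourself flag in your final paragraph. You reduce the whole argument to ``local finiteness of the active constraints'' at an interior point, but you do not prove it; you say only that it ``is expected to require'' slice-induction together with Lemma~\ref{lem: extremal curve}(iii). That lemma, however, is stated only for curves with $(K_X+B+\bfM_X)\cdot\Gamma<0$, which is exactly the \emph{non}-active condition, so it gives no control on the gradients $\nabla f_t$ for $t\in T_a$. Without such a bound, infinitely many active hyperplanes through $(B_0,\bfM_0)$ can cut out a non-polyhedral cone, and neither the inductive hypothesis on a single slice $\mP\cap\{f_{t_1}=0\}$ nor an unspecified ``rational rescaling'' bridges this.

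The paper avoids the active-constraint difficulty altogether by a sharper choice of centre. It takes $(B,\bfM)$ in the relative interior of $\anN_T(\mP)$ (not just of $\mP$) and proves (step \textcircled{c}) that, after \textcircled{a}--\textcircled{b}, one has $(K_X+B+\bfM_X)\cdot R_t<0$ for \emph{every} remaining $t$; in your language, $T_a=\emptyset$. With this, all of Lemma~\ref{lem: extremal curve} is available. The paper then does not attempt to extract a finite $T_0$: instead it uses Lemma~\ref{lem: extremal curve}(iv) to cover $\anN_T(\mP)$ by finitely many balls $\Bb_k$ and splits $T=\bigcup_k T_k$ so that, for each $k$, all $f_t$ with $t\in T_k$ vanish at a common point of $\anN_T(\mP)$. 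This reduces to the case where every $f_t$ vanishes at one point, hence (by linear algebra) at a rational point $(\Delta',\bfN')$; finally $\anN_T(\mP)$ is exhibited as the convex hull of $(\Delta',\bfN')$ and $\anN_T(\partial\mP)$, the latter rational by induction on the faces. The step you are missing is precisely \textcircled{c}.
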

\begin{proof}
The proof proceeds by induction on $\dim\mP$. It is clear if $\dim\mP=0$ or if $\anN_{T}(\mP)=\emptyset$. So we assume that $\dim\mP>0$ and $\anN_{T}(\mP)\neq \emptyset$. By induction we can assume that $\anN_{T}(\mP)$ is not contained in any proper face of $\mP$, and thus $\anN_{T}(\mP)$ contains a point that is simultaneously a relative interior point of $\anN_{T}(\mP)$ and of $\mP$. We fix this point once and for all, and denote it by $(B, \bfM)$. 

\noindent\textcircled{a}  By dropping those $t$ such that $\ma\mn_{\{t\}}(\mP)= \mP$, we may assume that for each $t\in T$, there is some $(\Delta,\bfN)\in \mP$ such that $(K_X+\Delta+\bfN_{X})\cdot R_t >0$.

\noindent\textcircled{b}  We claim that for any $t\in T$ there is some $(\Delta,\bfN)\in\mP$ such that $$(K_X+\Delta+\bfN_{X})\cdot R_t<0.$$ Otherwise, there is some $t_0\in T$ such that 
$(K_X+\Delta+\bfN_{X})\cdot R_{t_0}\geq 0$ for any $(\Delta,\bfN)\in \mP$.
 Then 
\[
\ma\mn_{\{t_0\}}(\mP) = \{(\Delta,\bfN)\in\mP\mid (K_X+\Delta+\bfN_{X})\cdot R_{t_0}=0\}.
\]
The inclusion $\{t_0\}\subset T$ implies the inclusion in the reversed direction $\ma\mn_{T}(\mP)\subset\ma\mn_{\{t_0\}}(\mP)$. But  $\ma\mn_{\{t_0\}}(\mP)$ is a proper face of $\mP$ due to the assumption \textcircled{a} above, contradicting the choice of $\mP$.

\noindent\textcircled{c} We claim that,  for any $t\in T$,  $$(K_X+B+\bfM_X)\cdot R_t <0$$ holds. Otherwise we have $(K_X+B+\bfM_X)\cdot R_{t_0} = 0$ for some $t_0\in T$. By  \textcircled{b} there is some $(\Delta,\bfN)\in \mP$ such that $(K_X+\Delta+\bfN_{X})\cdot R_{t_0}<0$, and thus 
\begin{equation}\label{eq: pos}
(B+\bfM_X-\Delta-\bfN_X)\cdot R_{t_0} >0.
\end{equation}
 Let $(\Delta', \bfN') = (B, \bfM) - \epsilon (\Delta-B , \bfN-\bfM)$ where $\epsilon>0$ is sufficiently small. Since $(B, \bfM)$ lies in the relative interior of $\anN_{T}(\mP)$, the vector $(\Delta', \bfN')$ still lies in $\anN_{T}(\mP)$. On the other hand, by \eqref{eq: pos}, we have
\begin{multline*}
(K_X+\Delta' +\bfN'_{X} )\cdot R_{t_0} = (K_X+B +\bfM_{X} )\cdot R_{t_0} + \epsilon (B+\bfM_X-\Delta  -\bfN_X) \cdot R_{t_0}  >0,
\end{multline*}
which is a contraction to the fact that $(\Delta', \bfN') \in\anN_{T}(\mP)$.

\medskip

As a consequence of \textcircled{b}, there is an extremal curve $\Gamma_t$ generating $R_t$ for any $t\in T$ by Lemma~\ref{lem: length}. By \textcircled{b} we have $(K_X+B+\bfM_X)\cdot \Gamma_t <0$. Now we can apply Lemma~\ref{lem: extremal curve} (iv)
to conclude that for each $(\Delta, \bfN)\in \anN_{T}(\mP)$, there is a constant $\delta(\Delta, \bfN)>0$, depending only on $(\Delta, \bfN)$ and $(B, \bfM)$, such that for any extremal curve $\Gamma_t$ ($t\in {T}$) with $(K_X+\Delta+\bfN_{X})\cdot \Gamma_t< 0$, and for any $(\Delta',\bfN')\in\mP$ with $||(\Delta'-\Delta,\bfN'-\bfN)||<\delta(\Delta, \bfN)$, we have $(K_X+\Delta'+\bfN_{X}')\cdot \Gamma_t< 0$. This yields an open cover of $\anN_{T}(\mP)$ as $(\Delta, \bfN)$ varies. Note that, being a closed convex subset of $\mP$, $\anN_{T}(\mP)$ is compact. Thus one can find finitely many elements  $(\Delta^{(1)},\bfN^{(1)}),\ldots,(\Delta^{(n)}, \bfN^{(n)})$ in $\anN_{T}(\mP)$ and real numbers $\delta_1,\ldots,\delta_n>0$ such that the following holds.
\begin{enumerate}[leftmargin=*]
		\item[(a)] $\anN_{T}(\mP)$ is covered by $$\Bb_k=\{(\Delta,\bfN)\in \mP \mid ||(\Delta-\Delta^{(k)}, \bfN-\bfN^{(k)})||<\delta_k\}.$$
		\item[(b)] If $(\Delta,\bfN)\in\Bb_k$ with $(K_X+\Delta+\bfN_{X})\cdot \Gamma_t>0$ for some $t\in {T}$, then $$(K_X+\Delta^{(k)}+\bfN^{(k)}_{X})\cdot \Gamma_t=0.$$ 
	\end{enumerate}
	Let $T_k=\{t\in {T}\mid (K_X+\Delta+\bfN_{X})\cdot \Gamma_t>0 \text{ for some }(\Delta,\bfN)\in \Bb_k\}.$  
	\begin{enumerate}[leftmargin=*]
		\item[(c)] $\di {T}=\bigcup_k {T_k}$ by the assumption \textcircled{a} made at the beginning of the proof, so $$\ma\mn_{T}(\mP) = \bigcap_{1\leq k\leq n} \ma\mn_{{T_k}}(\mP).$$ 
		\item[(d)] 
		As a consequence of (b),
		 $(K_X+\Delta^{(k)}+\bfN^{(k)}_{X})\cdot \Gamma_t=0$ for any $t\in {T_k}$.
	\end{enumerate}
	
\medskip

It suffices to prove that $\anN_{T_k}(\mP)$ is a rational polytope for each $k$. Replacing $\anN_{T}(\mP)$ with $\anN_{T_k}(\mP)$, we can assume that there is an element $(\Delta^{(0)}, \bfN^{(0)})\in \anN_{T}(\mP)$ such that $(K_X+\Delta^{(0)}+\bfN^{(0)}_{X})\cdot \Gamma_t=0$ for any $t\in {T}$. Note that $(\Delta^{(0)}, \bfN^{(0)})$ belongs to the following set
\[
\mP' = \{(\Delta,\bfN)\in\mP \mid (K_X+\Delta+\bfN_{X})\cdot \Gamma_{t}=0 \text{ for any } t\in {T} \}
\]
which is contained in $\anN_{T}(\mP)$. Although the index set $T$ may be
infinite, by simple linear algebra,  $\mP'$ is cut out from $\mP$ by finitely may affine hyperplanes of the form $(K_X+\Delta+\bfN_{X})\cdot \Gamma_{t}=0$. Thus $\mP'$ is a (nonempty) rational polytope and we can find $(\Delta', \bfN')\in\mP'\subset\anN_{T}(\mP)$ with rational coefficients such that $(K_X+\Delta'+\bfN'_{X})\cdot \Gamma_{t}=0$ holds  for any $t\in {T}$. 

\medskip

\noindent{\bf Claim. } $\anN_{T}(\mP)$ is the convex hull of $(\Delta', \bfN')$ and $\anN_{T}(\partial \mP)$.
\begin{proof}[Proof of the claim.]
For any element $(\Delta'', \bfN'')\neq (\Delta', \bfN')$ in $\anN_{T}(\mP)$, let $L$ be the ray from $(\Delta', \bfN')$ in the direction of $(\Delta'', \bfN'')$ and let $(\Delta''', \bfN''')\in\partial\mP$ be the point where $L$ leaves $\mP$. Since 
\[
(K_X+\Delta'+\bfN'_{X})\cdot \Gamma_{t}=0 \text{ and } (K_X+\Delta''+\bfN''_{X})\cdot \Gamma_{t}\leq 0,
\]
the segment between $(\Delta', \bfN')$ and $(\Delta''', \bfN''')$ is contained in $\anN_{{T}}(\mP)$; of course, this segment contains the given point $(\Delta'', \bfN'')$.  This finishes the proof the claim.
\end{proof}

Now we write $\partial\mP = \bigcup_{1\leq l\leq s}\mP_l$, where the $\mP_l$ are the codimension one faces of $\mP$, then $$\anN_{T}(\partial \mP) = \bigcup_{1\leq l\leq s}\anN_{T}(\mP_l).$$ By induction, the $\anN_{T}(\mP_l)$, if nonempty, are rational polytopes. By the claim above, $\anN_{T}(\mP)$ is a rational polytope.
\end{proof}

\begin{proof}[Proof of Proposition~\ref{prop: rat polytope}] 
(i)  Suppose that $q_{j_0}(\anN_{T}(\ml_{IJ}))\subset\RR\bfM_{j_0}$ is unbounded. Since $\anN_{T}(\ml_{IJ})$ is a subset of $\ml_{IJ}$ by definition, the image $q_{j_0}(\ml_{IJ})$ is also unbounded. By Lemma~\ref{lem: L} (ii), $\bfM_{j_0}$ descends to a nef divisor, say $M_{j_0}$, on $X$.

\medskip

\noindent{\bf Claim.} $M_{j_0} \cdot R_t =0$ for any $t\in T$.
\begin{proof}[Proof of the claim.]
Otherwise $M_{j_0} \cdot R_{t_0}>0$ for some $t_0\in T$. Since the image $q_{j_0}(\anN_{T}(\ml_{IJ}))\subset\RR\bfM_{j_0}$ is unbounded, there is an element $(\Delta, \bfN)\in\anN_{T}(\ml_{IJ})$ such that $\bfN= \sum_j y_j \bfM_j$ with $y_{j_0}>2r\dim X$, where $r$ is the Cartier index of $M_{j_0}$.
For a one-cycle $\Gamma_{t_0}$ generating $R_{t_0}$ we have
\[
0\geq (K_X + \Delta +\bfN_X) \cdot\Gamma_{t_0}= (K_X+ \Delta + \sum_{j\neq j_0} y_j \bfM_{j,X})\cdot\Gamma_{t_0} + y_{j_0} M_{j_0}\cdot \Gamma_{t_0}.
\]
It follows that 
\begin{equation}\label{eq: bound 5}
(K_X+ \Delta + \sum_{j\neq j_0} y_j \bfM_{j,X})\cdot\Gamma_{t_0} \leq - y_{j_0} M_{j_0}\cdot \Gamma_{t_0}  < -(2r\dim X)(M_{j_0}\cdot \Gamma_{t_0}).
\end{equation}
Note that $(X, \Delta + \sum_{j\neq j_0} y_j \bfM_j)$ is lc and $R_{t_0}$ is a $(K_X+ \Delta + \sum_{j\neq j_0} y_j \bfM_{j,X})$-negative extremal ray by \eqref{eq: bound 5}. So we can choose $\Gamma_{t_0}$ to be an extremal curve by Lemma~\ref{lem: length}, but then $rM_{j_0}\cdot \Gamma_{t_0} \geq 1$, and \eqref{eq: bound 5} contradicts the bound on the lengths of extremal curves in Lemma~\ref{lem: length}.
\end{proof}

For the other direction of implication, suppose that $\bfM_{j_0}$ descends to $X$ and $\bfM_{j_0, X}\cdot R_t=0$ for any $t\in T$. Then one sees easily that $(\Delta, \bfN)\in \anN_T(\ml_{IJ})$ if and only if $(\Delta, \bfN+ y'_{j_0} \bfM_{j_0})\in \anN_T(\ml_{IJ} )$ for any $y'_{j_0}>0$. It follows that the coefficient of $\bfM_{j_0}$ in the elements of $\anN_T(\ml_{IJ})$ can be arbitrarily large.

(ii) The boundedness of $\anN_T(\ml_{IJ''})$ and the decomposition \eqref{eq: decomp AN} follow easily from (i) and its proof. Since $\anN_T(\ml_{IJ''})$ is bounded, we can take a rational polytope $\mP$ such that $\anN_T(\ml_{IJ''})\subset\mP\subset\ml_{IJ''}$. Then we have
\[
\anN_T(\ml_{IJ''}) = \anN_T(\anN_T(\ml_{IJ''})) \subset \anN_T(\mP) \subset \anN_T(\ml_{IJ''}).
\]
It follows that  $\anN_T(\ml_{IJ''}) =  \anN_T(\mP)$, which is a rational polytope by Lemma~\ref{lem: bounded rational}.
\end{proof}

\section{The generalized canonical bundle formula}
In this section we first prove a lemma decomposing relatively trivial log canonical divisors with $\RR$-coefficients as the sum of those with $\QQ$-coefficients; this allows us to consider only generalized pairs with $\QQ$-coefficients. The basic idea has been explained in \cite[2.11]{Fli92} and \cite[Proposition~2.21]{KollarMMP13}; our proof follows closely that of \cite[Lemma~11.1]{Fujino18}.

\begin{lem}\label{lem: R to Q}
Let $(X/S, B + \bfM)$ be a generalized  pair over a scheme $S$ such that $\bfM$ is an $\RR_{>0}$-linear combination of nef$/S$ $\QQ$-Cartier divisors. Let $f\colon X\rightarrow Z$ be a projective surjective morphism between normal varieties over $S$ such that
$$K_X + B+\bfM_X \sim_{\RR, f} 0.$$
Then for any given $\epsilon >0$ there are finitely many positive real numbers $c_{\alpha}$ satisfying $\sum_{\alpha} c_{\alpha}=1$ and generalized pairs $(X/S, B^{(\alpha)} + \bfM^{(\alpha)})$ with $\QQ$-coefficients such that $K_X+B + \bfM_X = \sum_{{\alpha}} c_{\alpha}(K_X + B^{(\alpha)} +\bfM^{(\alpha)}_X)$, and the following holds for each $\alpha$.
\begin{enumerate}[leftmargin=*]
\item[(i)] $K_X + B^{(\alpha)} +\bfM^{(\alpha)}_X\sim_{\QQ, f}0$;
\item[(ii)] $|\mult_D(B^{(\alpha)}-B)|$ is at most $\epsilon$ for any prime divisor $D$ on $X$, and is zero if $\mult_D(B)$ is rational;
\item[(iii)] $B^{(\alpha)}$ is effective if $B$ is;
\item[(iv)] there are equalities:
\begin{itemize}
\item $\Nlc(X/S, B+\bfM) = \Nlc(X/S, B^{(\alpha)} + \bfM^{(\alpha)})$,
\item $\Nklt(X/S, B+\bfM) = \Nklt(X/S, B^{(\alpha)}+\bfM^{(\alpha)})$.
\end{itemize}
\end{enumerate}
\noindent In particular, if $(X/S, B+\bfM)$ is lc (resp.~klt), then so is $(X/S, B^{(\alpha)} + \bfM^{(\alpha)})$.
\end{lem}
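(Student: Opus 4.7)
The plan is a perturbation argument: reparametrise all the ingredients as real coordinates varying inside a $\QQ$-defined affine subspace of a finite-dimensional $\RR$-vector space, and then invoke density of rational points.

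Concretely, write $B=\sum_{i\in I}b_iB_i$ with the $B_i$ distinct prime divisors and $b_i\neq 0$, and $\bfM=\sum_{j\in J}\mu_j\bfM_j$ with $\mu_j\in\RR_{>0}$ and the $\bfM_j$ nef$/S$ $\QQ$-Cartier $\rmb$-divisors. The hypothesis $K_X+B+\bfM_X\sim_{\RR,f}0$ unfolds as an identity
$$K_X+\sum_i b_iB_i+\sum_j\mu_j\bfM_{j,X}=\sum_{n=1}^{N}r_n\ddiv(\psi_n)+f^{*}\Bigl(\sum_{m=1}^{M}d_mD_m\Bigr)$$
in $\Div(X)_\RR$ for finitely many rational functions $\psi_n\in\CC(X)^{\times}$, Cartier divisors $D_m$ on $Z$, and reals $r_n,d_m$. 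Let $V=\RR^{I+J+N+M}$ with coordinates $(b_i',\mu_j',r_n',d_m')$ and
$$\Phi\colon V\longrightarrow\Div(X)_\RR,\quad p'\mapsto\sum_i b_i'B_i+\sum_j\mu_j'\bfM_{j,X}-\sum_n r_n'\ddiv(\psi_n)-f^{*}\Bigl(\sum_m d_m'D_m\Bigr).$$
Every summand on the right is a $\QQ$-divisor on $X$ (using $\QQ$-Cartierness of each $\bfM_j$), so $\Phi$ is $\QQ$-linear; hence $A:=\Phi^{-1}(-K_X)\subset V$ is a $\QQ$-defined affine subspace containing the tuple $p_0:=(b_i,\mu_j,r_n,d_m)$.

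Fix a log resolution $\rho\colon\widetilde X\to X$ on which every $\bfM_j$ descends and such that $\Supp(\rho_{*}^{-1}B)\cup\Exc(\rho)$ is snc. For each $p'\in V$, define $\widetilde B(p')$ by
$$K_{\widetilde X}+\widetilde B(p')+\sum_j\mu_j'\bfM_{j,\widetilde X}=\rho^{*}\Bigl(K_X+\sum_i b_i'B_i+\sum_j\mu_j'\bfM_{j,X}\Bigr).$$
By the Negativity Lemma applied to each $\bfM_j$ together with the snc hypothesis, $\mult_E\widetilde B(p')$ is a $\QQ$-affine function of $(b_i',\mu_j')$ for each prime $E\subset\widetilde X$. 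Cut $A$ down to $A'\subset A$ by imposing: (a) $b_i'=b_i$ for every $i$ with $b_i\in\QQ$; (b) $\mult_E\widetilde B(p')=1$ for every $E$ with $\mult_E\widetilde B(p_0)=1$; (c) $\mult_E\widetilde B(p')>1$ (resp.\ $<1$) for every $E$ with $\mult_E\widetilde B(p_0)>1$ (resp.\ $<1$); (d) $\mu_j'>0$, $b_i'>0$ whenever $b_i>0$, and $|b_i'-b_i|<\epsilon$. Conditions (a) and (b) are $\QQ$-affine equations, while (c), (d) are open, so $A'$ is a relatively open subset of a $\QQ$-defined affine subspace $\overline{A'}\subset A$, with $p_0\in A'$.

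Rational points are dense in $\overline{A'}$, so pick finitely many rational points $p_\alpha\in A'$ — say the vertices of a small rational simplex in $\overline{A'}$ containing $p_0$ in its relative interior — and positive reals $c_\alpha$ with $\sum_\alpha c_\alpha=1$ and $\sum_\alpha c_\alpha p_\alpha=p_0$. For each $\alpha$, set $B^{(\alpha)}:=\sum_i b_i^{(\alpha)}B_i$ and $\bfM^{(\alpha)}:=\sum_j\mu_j^{(\alpha)}\bfM_j$. The inclusion $p_\alpha\in A$ reads
$$K_X+B^{(\alpha)}+\bfM^{(\alpha)}_X=\sum_n r_n^{(\alpha)}\ddiv(\psi_n)+f^{*}\Bigl(\sum_m d_m^{(\alpha)}D_m\Bigr)\sim_{\QQ,f}0,$$
giving (i); since $\mu_j^{(\alpha)}\in\QQ_{>0}$ and each $\bfM_j$ is nef$/S$ $\QQ$-Cartier, $\bfM^{(\alpha)}$ is a nef $\rmb$-$\QQ$-divisor, so $(X/S,B^{(\alpha)}+\bfM^{(\alpha)})$ is a generalized pair with $\QQ$-coefficients. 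Conditions (ii) and (iii) are built into constraints (a) and (d), and (iv) holds because $\widetilde B^{(\alpha)}$ satisfies the same three-way comparison with $1$ at every prime of $\widetilde X$ as $\widetilde B$, so the non-klt and non-lc loci coincide. Averaging, $\sum_\alpha c_\alpha p_\alpha=p_0$ yields $K_X+B+\bfM_X=\sum_\alpha c_\alpha(K_X+B^{(\alpha)}+\bfM^{(\alpha)}_X)$.

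The one non-routine ingredient is securing the $\QQ$-affine dependence of each $\mult_E\widetilde B(p')$ on the parameters: this crucially uses the $\QQ$-Cartierness of every $\bfM_j$, which makes the exceptional divisors $G_j$ in $\rho^{*}\bfM_{j,X}=\bfM_{j,\widetilde X}+G_j$ $\QQ$-divisors, so that the equations cutting out $\overline{A'}$ have $\QQ$-coefficients; without this, rational points need not be dense in $\overline{A'}$ and the whole scheme collapses.
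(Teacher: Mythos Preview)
Your argument follows the same perturbation-in-a-$\QQ$-affine-subspace strategy as the paper, and the overall structure is sound. There is, however, one step whose justification does not work as written. You assert $\rho^{*}\bfM_{j,X}=\bfM_{j,\widetilde X}+G_j$ and deduce that $\mult_E\widetilde B(p')$ is $\QQ$-affine in $(b_i',\mu_j')$ via the Negativity Lemma; this implicitly assumes each $\bfM_{j,X}$ is $\QQ$-Cartier on $X$ (so that $\rho^{*}\bfM_{j,X}$ is defined) and that each $B_i$ can be pulled back separately. But the hypothesis that $\bfM_j$ is a $\QQ$-Cartier $\rmb$-divisor means only that it descends to a $\QQ$-Cartier divisor on \emph{some} higher model, not that its trace $\bfM_{j,X}$ is $\QQ$-Cartier; and $X$ is not assumed $\QQ$-factorial. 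The same objection applies to your claim that $\widetilde B(p')$ is defined for every $p'\in V$: for $p'\notin A$ the divisor $K_X+\sum_i b_i'B_i+\sum_j\mu_j'\bfM_{j,X}$ need not be $\RR$-Cartier, so the pullback is meaningless.

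The fix is already present in your setup. For $p'\in A$ one has
\[
K_X+\sum_i b_i'B_i+\sum_j\mu_j'\bfM_{j,X}=\sum_n r_n'\ddiv(\psi_n)+f^{*}\Bigl(\sum_m d_m'D_m\Bigr),
\]
and the right-hand side is an $\RR$-combination of Cartier divisors. Pulling this back term by term yields
\[
\widetilde B(p')=\sum_n r_n'\ddiv(\rho^{*}\psi_n)+\sum_m d_m'(f\rho)^{*}D_m-K_{\widetilde X}-\sum_j\mu_j'\bfM_{j,\widetilde X},
\]
which is visibly $\QQ$-affine in the coordinates of $p'$; conditions (b) then cut out a $\QQ$-affine subspace of $A$ and the rest of your argument goes through unchanged. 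The paper avoids this wrinkle by setting up the linear map on the smooth model $\widetilde X$ from the outset, so that every divisor is automatically Cartier and the coefficients of $\widetilde B$ are themselves among the parameters; this is cleaner bookkeeping but not a different idea.
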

\begin{proof}
By assumption we have $\bfM = \sum_{1\leq j\leq r} \mu_j \bfM_j$ where $\mu_j\in \RR_{> 0}$ and $\bfM_j$ are nef$/S$ $\QQ$-Cartier $\rmb$-divisors. Let $\rho\colon \widetilde X\rightarrow X$ be a log resolution such that the $\rmb$-divisors $\bfM_j$ all descend to $\widetilde X$, so the traces satisfy $\bfM_{\widetilde X} = \sum_j \mu_j \bfM_{j, \widetilde X}$ with $\bfM_{j, \widetilde X}$ Cartier and nef$/S$. Write $K_{\widetilde X} + \widetilde B+\bfM_{\widetilde X} = \rho^*(K_X + B+\bfM_X)$.
The condition $K_X + B+\bfM_X \sim_{\RR, f} 0$  means that there is an $\RR$-Cartier $\RR$-divisor $D=\sum_{1\leq k\leq m} d_k D_k$ on $Z$ such that $K_{X} +B+\bfM_{X}\sim_{\RR} f^* D$, where $d_k\in \RR$ and $D_k$ are Cartier divisors for each $1\leq k\leq m$. Pulling back to $\widetilde X$ by $\rho$, we obtain
$K_{\widetilde X} +\widetilde B+\bfM_{\widetilde X}\sim_{\RR} \widetilde f^* D$, where $\widetilde f = f\circ \rho$. Thus there are $a_l\in\RR$ and rational functions $\varphi_l$ on $\widetilde X$ such that
\[
K_{\widetilde X} +\widetilde B+\bfM_{\widetilde X} + \sum_{1\leq l\leq p} a_l(\varphi_l) = \sum_{1\leq k\leq m} d_k\widetilde f^* D_k
\]
where $(\varphi_l)$ denotes the divisor of $\varphi_l$.

Write $\widetilde B = \sum_{1\leq i\leq u} b_i {\widetilde B_i}$ where the ${\widetilde B_i}$ are distinct prime divisors. We may assume that $b_i\in \RR\setminus\QQ$ for $1\leq i\leq q$ and $b_i\in \QQ$ for $i\geq q+1$. Now consider the following linear map
\[
\Phi \colon \RR^{m+p+q+r} \rightarrow \Div(\widetilde X) \otimes_{\ZZ} \RR
\]
defined by 
\begin{multline*}
\Phi(x_1, \dots, x_{m+p+q+r}) = 
 \sum_{1\leq k\leq m} x_{k} \widetilde f^*D_k - \sum_{1\leq l\leq p}  x_{m+l} (\varphi_l) \\
 - \sum_{1\leq i \leq q} x_{m+p+i} {\widetilde B_i}  - \sum_{1\leq j\leq r} x_{m+p+q+j} \bfM_{j, \widetilde X}.
\end{multline*}
We note that $\Phi$ is defined over $\QQ$, and thus $\ma:=\Phi^{-1}(K_{\widetilde X} + \sum_{i\geq q+1} b_i {\widetilde B_i})$, containing the point $P=(d_1,\dots,d_m,a_1,\dots,a_p,b_1,\dots,b_q, \mu_1, \dots, \mu_r)$, is a nonempty affine space defined over $\QQ$. Therefore, there exist $P_1,\dots, P_n\in\ma\cap \QQ^{m+p+q+r}$ and $c_1,\dots, c_n\in \RR_{>0}$ such that 
\begin{equation}\label{eq: decomp 2}
\sum_{1\leq\alpha\leq n} c_{\alpha}=1\text{ and }\sum_{1\leq\alpha\leq n} c_{\alpha} P_{\alpha}   = P.
\end{equation}
Moreover, we can choose $$P_{\alpha}=(d_1^{(\alpha)},\dots,d_m^{(\alpha)},a_1^{(\alpha)},\dots,a_p^{(\alpha)},b_1^{(\alpha)},\dots,b_q^{(\alpha)}, \mu_1^{(\alpha)}, \dots, \mu_r^{(\alpha)})$$ sufficiently close to $P$, and obtain the corresponding generalized pairs $(\widetilde X, \widetilde B^{(\alpha)} + \bfM^{(\alpha)})$ with 
\[
\widetilde B^{(\alpha)} = \sum_{1\leq i\leq 1} b_i^{(\alpha)}\widetilde B_i  + \sum_{i\geq q+1} b_i {\widetilde B_i}\text{ and } \bfM^{(\alpha)}=\sum_{1\leq j\leq r} \mu_j^{(\alpha)} \bfM_j.
\]
By \eqref{eq: decomp 2}, we have $\widetilde B=\sum_{1\leq\alpha\leq n} c_{\alpha} \widetilde B^{(\alpha)}$ and $\bfM=\sum_{1\leq\alpha\leq n} c_{\alpha} \bfM^{(\alpha)}$.

The equation $\Phi(P_{\alpha}) = K_{\widetilde X} + \sum_{i\geq q+1} b_i {\widetilde B_i}$ implies that 
$$K_{\widetilde X}+ \widetilde B^{(\alpha)} + \bfM^{(\alpha)}_X \sim_{\QQ, \widetilde f}0.$$
By construction $\mult_{{\widetilde B_i}} (\widetilde B^{(\alpha)}) = \mult_{{\widetilde B_i}}(\widetilde B) $ for $i\geq q+1$. 
Since the $P_\alpha$ are sufficiently closed to $P$, we have $$\Nlc(\widetilde X/S, \widetilde B+\bfM) = \Nlc(\widetilde X/S, \widetilde B^{(\alpha)} + \bfM^{(\alpha)}),$$
and
$$\Nklt(\widetilde X/S, \widetilde B+\bfM) = \Nklt(\widetilde X/S, \widetilde B^{(\alpha)}+\bfM^{(\alpha)}).$$
Setting $B^{(\alpha)} = \rho_*\widetilde B^{(\alpha)}$, one readily verifies that $(X/S, B^{(\alpha)}+\bfM^{(\alpha)})$ are generalized pairs satisfying all the properties in the conclusion of the lemma.
\end{proof}

We need a simultaneous partial resolution of a generically finite morphism.
\begin{lem}\label{lem: factorize}
Let $f\colon X \rightarrow Z$ be a generically finite projective surjective morphism between normal varieties. Then there is a commutative diagram of projective surjective morphisms between normal varieties
\begin{center}
\begin{tikzcd}
\widetilde X \arrow[r, "\rho"] \arrow[d, "\widetilde f"]& X\arrow[d, "f"] \\
\widetilde Z \arrow[r, "\mu"] & Z
\end{tikzcd} 
\end{center}
such that the following holds.
\begin{enumerate}[leftmargin=*]
\item[(i)] The morphisms $\rho$ and $\mu$ are birational and $\widetilde f$ is finite, and
\item[(ii)] $\widetilde X$ and $\widetilde Z$ have at most quotient singularities.
\end{enumerate}
\end{lem}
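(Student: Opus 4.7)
The plan is to construct $\widetilde X$ and $\widetilde Z$ as quotients of a common smooth variety by finite subgroups of the Galois group arising from the function-field extension $K(X)/K(Z)$, so that the quotient-singularities condition is automatic.

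First, I apply Stein factorization to write $f$ as the composition $X\xrightarrow{g}X'\xrightarrow{h}Z$ with $g$ birational and $h$ finite. Let $L$ be the Galois closure of the finite extension $K(X')/K(Z)$, and set $G:=\Gal(L/K(Z))$ and $H:=\Gal(L/K(X'))\leq G$. Let $Y$ denote the normalization of $Z$ in $L$; then $Y\to Z$ is a finite Galois cover with group $G$, and $Y/H=X'$. Composing the natural map $Y\to X'$ with the birational inverse of $g$ yields an $H$-invariant rational map $\phi\colon Y\dashrightarrow X$ over $Z$.

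Second, by $G$-equivariant resolution of singularities combined with $G$-equivariant resolution of indeterminacies, I produce a $G$-equivariant projective birational morphism $\widetilde Y\to Y$ with $\widetilde Y$ smooth, such that $\phi$ extends to a morphism $\widetilde Y\to X$. Setting $\widetilde X:=\widetilde Y/H$ and $\widetilde Z:=\widetilde Y/G$, both are quotients of the smooth variety $\widetilde Y$ by a finite group and therefore have at most quotient singularities. The $H$-invariance of $\widetilde Y\to X$ (inherited from $\phi$) descends to a morphism $\rho\colon\widetilde X\to X$, while the $G$-invariant composition $\widetilde Y\to Z$ descends to $\mu\colon\widetilde Z\to Z$; a function-field comparison shows that both $\rho$ and $\mu$ are birational. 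The induced $\widetilde f\colon\widetilde X\to\widetilde Z$ is proper (since $\widetilde Y\to\widetilde Z$ is proper and $\widetilde Y\to\widetilde X$ is surjective) and has finite fibres (each contained in the image of a finite fibre of $\widetilde Y\to\widetilde Z$), hence finite.

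The hard part will be the $G$-equivariant resolution of $\phi$ in step two: because $\phi$ is only $H$-invariant, its indeterminacy locus is only $H$-stable, not $G$-stable in general. My remedy is a $G$-symmetrization: any $H$-stable ideal $I\subset\Oo_Y$ whose blow-up removes the indeterminacy of $\phi$ can be replaced by $\prod_{g\in G/H}g^*I$, which is $G$-stable and whose blow-up dominates the blow-up of $I$. Iterating this trick alongside standard $G$-equivariant resolution of singularities (in the manner of Villamayor or Bierstone--Milman) produces the desired $\widetilde Y$ and completes the construction.
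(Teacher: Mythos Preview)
Your argument is correct and follows essentially the same strategy as the paper: pass to the Galois closure $Y\to Z$, find a $G$-equivariant smooth model $\widetilde Y$ on which the rational map to $X$ becomes a morphism, and set $\widetilde X=\widetilde Y/H$, $\widetilde Z=\widetilde Y/G$. The paper cites Cutkosky for the equivariant resolution of indeterminacies, whereas you spell out the $G$-symmetrization $\prod_{g\in G/H} g^*I$; this works because on the blow-up of a product of nonzero ideals over an integral scheme each factor becomes invertible (if $I'J'=(f)$ locally then $J'/f$ is an inverse fractional ideal for $I'$), so the blow-up indeed dominates $\mathrm{Bl}_I Y$. Two cosmetic differences: your initial Stein factorization is harmless but unnecessary since $K(X')=K(X)$, and the paper handles the non-complete case by compactifying and restricting, while your construction already works relatively over $Z$ without that step.
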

\begin{proof}
First suppose that $X$ and $Z$ are both complete. Since $f\colon X\rightarrow Z$ is generically finite, we have a finite extension of function fields $\CC(X)/\CC(Z)$. Let $L$ be the Galois closure of $\CC(X)$ in the algebraic closure $\overline{\CC(Z)}$, and $Y$ the normalization of $Z$ in $L$. By construction, $Y$ is a projective normal variety with function field $\CC(Y) = L$, and there is  a Galois finite morphism $g\colon Y\rightarrow Z$ with Galois group $G=\Gal(\CC(Y)/\CC(Z))$. 

By the argument of \cite[Theorem~6.1]{Cutkosky03}, we can find a $G$-equivariant resolution $\widetilde Y\rightarrow Y$ such that the rational map from $\widetilde Y$ to $X$ induced by the field extension $\CC(Y)/\CC(X)$ is a morphism.  Let $H=\Gal(\CC(Y)/\CC(X))$ be the Galois group. Now we simply take $\widetilde X = \widetilde Y/H$ and $\widetilde Z = \widetilde Y/ G$, and let $\widetilde f\colon \widetilde X\rightarrow \widetilde Z$ be the natural finite morphism. 

In general, when $Z$ is not necessarily complete, we can fit the morphism $f\colon X\rightarrow Z$ into the following commutative diagram
\begin{center}
\begin{tikzcd}
 X \arrow[r, hook,"\iota"] \arrow[d,  "f"]& \bar X\arrow[d, "\bar f"] \\
 Z \arrow[r, hook, "i"] & \bar Z
\end{tikzcd} 
\end{center}
where $\iota$ and $i$ are open embeddings into complete normal varieties, and apply the previous arguments to obtain
\begin{center}
\begin{tikzcd}
\widetilde X \arrow[r, "\bar \rho"] \arrow[d, "\widetilde f"]& \bar X\arrow[d, "\bar f"] \\
\widetilde Z \arrow[r, "\bar\mu"] & \bar Z
\end{tikzcd} 
\end{center}
with $\bar \rho$ and $\bar\mu$ satisfying (i) and (ii). Now restricting to $Z\subset \bar Z$ and its inverse images gives us the commutative diagram we wanted.
\end{proof}
\begin{rmk}\label{rmk: fibre prod}
By the universal property of fibre products, one sees that $\widetilde X$ is isomorphic to the normalization of the main component of $X\times_{Z} \widetilde Z$.
\end{rmk}

We prove some compatibility of pullback of divisors under birational morphisms with pushforward of divisors under generically finite morphisms.
\begin{lem}\label{lem: push&pull}
Let $\FF$ be either the rational field $\QQ$ or the real number field $\RR$. Consider a commutative diagram of projective surjective morphisms between normal varieties
\begin{center}
\begin{tikzcd}
 \widehat X \arrow[r, "\widetilde \rho"] \arrow[d, "\widehat f"]& \widetilde X\arrow[d, "\widetilde f"] \\
 \widehat Z \arrow[r, "\widetilde \mu"] & \widetilde Z
\end{tikzcd} 
\end{center}
such that
\begin{itemize}[leftmargin=*]
\item $\widetilde \rho$ and $\widetilde \mu$ are birational morphisms, and
\item $\widetilde f$ is finite, so $\widehat f$ is generically finite.
\end{itemize}
Then for any $\FF$-Cartier $\FF$-divisor (resp.~Cartier) $D$ on $\widetilde X$, the following holds.
\begin{enumerate}[leftmargin=*]
\item[(i)] The pushforward $\widetilde f_* D$ is $\FF$-Cartier (resp.~Cartier).
\item[(ii)] $\widetilde\mu^*(\widetilde f_* D) = \widehat f_*(\widetilde\rho^* D)$ holds.
\end{enumerate}
\end{lem}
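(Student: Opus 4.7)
The strategy is to reduce to the Cartier case by $\FF$-linearity, then compute everything via the norm map of the finite function-field extension $\CC(\widetilde X)/\CC(\widetilde Z)$, which the birational morphisms $\widetilde\rho,\widetilde\mu$ identify with the norm for $\CC(\widehat X)/\CC(\widehat Z)$. Writing an $\FF$-Cartier $\FF$-divisor as an $\FF$-linear combination of Cartier divisors and using the linearity of both Weil pushforward and Cartier pullback, we may assume from the outset that $D$ is Cartier.

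For part (i), I would fix $z\in\widetilde Z$ and, using that $\widetilde f$ is finite (hence affine), choose an affine open $V\ni z$ in $\widetilde Z$ such that $U=\widetilde f^{-1}(V)$ is affine and $D$ is cut out on $U$ by a single $h\in\CC(\widetilde X)^{*}$. Let $N=N_{\CC(\widetilde X)/\CC(\widetilde Z)}$. The classical norm--valuation formula
\[
v_{X'}(N(h))=\sum_{Y'\mapsto X'}[\CC(Y'):\CC(X')]\,v_{Y'}(h),
\]
evaluated at each prime divisor $X'\subset\widetilde Z$ meeting $V$ (the sum running over prime divisors $Y'\subset\widetilde X$ dominating $X'$), identifies the Cartier divisor $\ddiv(N(h))|_V$ with the Weil pushforward $(\widetilde f_*D)|_V$. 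Hence $\widetilde f_*D$ is Cartier.

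For part (ii), I would compare coefficients at each prime divisor $E\subset\widehat Z$. When $E$ is not $\widetilde\mu$-exceptional, $\widetilde\mu$ induces an isomorphism of local rings at the generic point of $E$, and the equality is immediate by restricting to a common open. When $E$ is $\widetilde\mu$-exceptional, pick $h$ as above with $z=\widetilde\mu(\text{generic point of }E)$; the coefficient of $E$ in $\widetilde\mu^{*}(\widetilde f_*D)$ equals $v_E(N(h))$ by part (i), while the coefficient of $E$ in $\widehat f_*(\widetilde\rho^{*}D)$ equals $\sum_{Y'\mapsto E}[\CC(Y'):\CC(E)]\,v_{Y'}(h)$, the sum taken over the finitely many prime divisors $Y'\subset\widehat X$ dominating $E$ (others contribute zero). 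Under the identifications $\CC(\widehat X)=\CC(\widetilde X)$ and $\CC(\widehat Z)=\CC(\widetilde Z)$ induced by $\widetilde\rho$ and $\widetilde\mu$, the norms $N_{\widehat f}$ and $N_{\widetilde f}$ coincide, so applying the same norm--valuation formula to the finite extension of DVRs at $E$ yields the desired equality.

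The main obstacle is the norm--valuation formula matching the Weil pushforward of a Cartier divisor with $\ddiv(N(h))$ locally: this is classical but requires careful bookkeeping of ramification and residue-field degrees at codimension-one points, including ramified ones. Once this key identity is in hand, both (i) and (ii) reduce to formal manipulations with function fields and norms via the birational identifications.
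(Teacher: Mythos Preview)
Your approach is correct and takes a genuinely different route from the paper's. Both begin with the same reduction to Cartier $D$ and both identify $\widetilde f_*D$ locally as the divisor of $\Norm_{\widetilde f}(h)$. The divergence is in part (ii). The paper does not compare coefficients at prime divisors of $\widehat Z$; instead it invokes Lemma~\ref{lem: factorize} to build a further square with $X'\to\widehat X$, $Z'\to\widehat Z$ birational and $f'\colon X'\to Z'$ \emph{finite}, checks on $Z'$ the Cartier identity $f'_*((\widetilde\rho\widehat\rho)^*D)=(\widetilde\mu\widehat\mu)^*(\widetilde f_*D)$ via the equality of rational functions $\Norm_{f'}((\widetilde\rho\widehat\rho)^*h)=(\widetilde\mu\widehat\mu)^*\Norm_{\widetilde f}(h)$, and then pushes forward by $\widehat\mu$. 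The paper's method never has to analyse the merely generically finite map $\widehat f$ over individual divisors; yours avoids the auxiliary construction of Lemma~\ref{lem: factorize} and is more direct.

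One step in your sketch needs to be made explicit. When you ``apply the same norm--valuation formula to the finite extension of DVRs at $E$'', the valuations entering that formula are the maximal ideals of the integral closure of $\Oo_{\widehat Z,\eta_E}$ in $\CC(\widehat X)$, and it is not automatic that these coincide with the valuations $v_{Y'}$ for prime divisors $Y'\subset\widehat X$ dominating $E$, since $\widehat f$ is only generically finite. This is where normality and properness of $\widehat f$ are used: taking the Stein factorization $\widehat X\to W\to\widehat Z$, the valuations in question are exactly the $v_{E'}$ for prime divisors $E'\subset W$ over $E$, and since $\widehat X\to W$ is proper birational between normal varieties its non-isomorphism locus in $W$ has codimension $\ge2$, so each $E'$ has a unique strict transform $Y'$ on $\widehat X$ with $v_{Y'}=v_{E'}$ and $[\CC(Y'):\CC(E)]=[\CC(E'):\CC(E)]$. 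The same bijection handles your non-exceptional case as well, so the case split is in fact unnecessary; note that ``restricting to a common open'' is not quite enough there, because $\widetilde\rho$ need not be an isomorphism over the locus where $\widetilde\mu$ is. With this identification spelled out, your argument goes through cleanly.
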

\begin{proof}
If $D$ is an $\FF$-Cartier $\FF$-divisor then we can write $D=\sum d_i D_i$ with $d_i\in \FF$ and  $D_i$ Cartier. Since the assertions are $\FF$-linear in $D_i$, by replacing $D$ with $D_i$, we can assume that $D$ is Cartier. Denote 
\[
D_{\widetilde Z} = \widetilde f_* D \text{ and }D_{\widehat Z} =  \widehat f_*(\widetilde\rho^* D)
\]
Note that, $\widetilde f$ is a ramified cover and $D_{\widetilde Z}$ is nothing but the norm of $D$ under $\widetilde f$. The assertion  (i) (for $D$ Cartier) is already contained in \cite[2.40]{KollarMMP13}; see also \cite[\href{https://stacks.math.columbia.edu/tag/0BCX}{Tag 0BCX}]{stacks-project}. Let us recall the argument: one can find an open cover $\widetilde Z = \cup_i V_i$ such that $D\cap \widetilde f^{-1}(V_i)$ is a principal divisor of $\widetilde f^{-1}(V_i)$ defined by a local equation $g_i=0$, where $g_i\in \CC(\widetilde f^{-1}(V_i)) = \CC(\widetilde X)$ is a rational function. Then $D_{\widetilde Z}\cap V_i$ is a principal divisor of $V_i$ defined by $\Norm_{\widetilde f}(g_i)=0$, where $\Norm_{\widetilde f}(g_i)\in \CC(V_i) = \CC(\widetilde Z)$ is the norm of $g_i$ under $\widetilde f$. 

By Lemma~\ref{lem: factorize}, there is a commutative diagram of projective surjective morphisms between normal varieties
\begin{equation}\label{eq: sandwich}
\begin{tikzcd}
X' \arrow[r, "\widehat\rho"] \arrow[d, "f'"]& \widehat X\arrow[d, "\widehat f"] \arrow[r, "\widetilde \rho"] & \widetilde X\arrow[d, "\widetilde f"]\\
Z' \arrow[r, "\widehat\mu"] & \widehat Z \arrow[r, "\widetilde \mu"] & \widetilde Z
\end{tikzcd} 
\end{equation}
such that the morphisms $\widehat\rho$ and $\widehat\mu$ are birational and $f'$ is finite. Set
\[D':=(\widetilde\rho\circ \widehat\rho)^* D \text{ and }D_{Z'} = f'_* D'.
\]
On $(\widetilde f\circ \widetilde\rho\circ \widehat\rho)^{-1}(V_i)$, the divisor $D'$ is principal,  defined by $g_i'=0$, where $g_i'=(\widetilde\rho\circ \widehat\rho)^*g_i:=g_i\circ \widetilde\rho\circ \widehat\rho$ is the pull-back of $g_i$. Then, similarly as in the case of $D_{\widetilde Z}$ above, $D_{Z'} \cap (\widetilde\mu\circ\widehat \mu)^{-1}(V_i)$ is defined by $\Norm_{f'}(g_i')=0$. The identifications of the rational functions on $X'$ and $\widetilde X$ (resp.~$Z'$ and $\widetilde Z$) via pull-backs are compatible with the operation of taking norms, as the following commutative diagram shows:
\[
\begin{tikzcd}
\CC(\widetilde X) \arrow[r,"\cong","(\widetilde\rho\circ\widehat\rho)^*"']\arrow[d,"\Norm_{\widetilde f}"']&\CC(X')\arrow[d,"\Norm_{f'}"]\\
\CC(\widetilde Z) \arrow[r,"\cong","(\widetilde\mu\circ\widehat\mu)^*"']&\CC(Z') 
\end{tikzcd}
\]
Therefore,
\[
\Norm_{f'}(g_i') = \Norm_{f'}((\widetilde\rho\circ \widehat\rho)^*g_i) = (\widetilde\mu\circ\widehat\mu)^* \Norm_{\widetilde f}(g_i) 
\]
that is, the local defining equation of $D_{Z'}$ is the pullback of the local defining equation of $D_{\widetilde Z}$. In other words, we have $D_{Z'} = (\widetilde\mu\circ\widehat \mu)^*D_{\widetilde Z}$.  We do a simple verification that the two divisors $\widehat\mu_*D_{Z'}$ and $D_{\widehat Z}$ are the same:
\begin{equation*}
\widehat\mu_*D_{Z'}  = \widehat\mu_*f'_* D' = \widehat f_*\widehat\rho_*D' =\widehat f_*\widehat\rho_*(\widetilde\rho\circ \widehat\rho)^* D =  \widehat f_*\widetilde\rho^* D =  D_{\widehat Z}
\end{equation*}
where we use the commutativity of \eqref{eq: sandwich} for the second equality, and the fact that $\widehat\rho_*\widehat\rho^*$ acts as the identity on Cartier divisors for the fourth equality.
In conclusion, we obtain
\begin{equation*}
D_{\widehat Z} = \widehat\mu_* D_{Z'} = \widehat\mu_* (\widetilde\mu\circ\widehat \mu)^*D_{\widetilde Z} = \widetilde\mu^* D_{\widetilde Z},
\end{equation*}
which is what we wanted to prove.
\end{proof}

\begin{thm}\label{thm: gen finite}
Let $\FF$ be either the rational number field $\QQ$ or the real number field $\RR$. Let $S$ be a scheme. Let $f\colon X\rightarrow Z$ be a  generically finite surjective morphism between normal varieties, projective over $S$. Suppose that there are an $\FF$-divisor $B$ and a $\rmb$-$\FF$-divisor $\bfM$ on $X$, such that 
\begin{itemize}[leftmargin=*]
\item $\bfM$ is an $\FF_{> 0}$-linear combination of nef$/S$ $\QQ$-Cartier $\rmb$-divisors, and
\item $(X/S, B+\bfM)$ is a generalized pair such that $K_X+B+\bfM_X\sim_{\FF, f} 0 .$
\end{itemize}
Then there are an $\FF$-divisor $B_Z$ and a nef/$S$ $\rmb$-$\FF$-divisor $\bfM_f$ on $Z$ such that the following holds.
\begin{enumerate}[leftmargin=*]
\item[(i)] $\bfM_f$ is an $\FF_{>0}$-linear combination of nef$/S$ $\QQ$-Cartier $\rmb$-divisors.
\item[(ii)] $B_Z$ is effective if $B$ is.
\item[(iii)] $(Z/S, B_Z+\bfM_{f})$ is a generalized pair with $$K_X+B+\bfM_X\sim_{\FF} f^*(K_Z+B_Z+\bfM_{f,Z}).$$
\item[(iv)] $(Z/S, B_Z+\bfM_{f})$ is sub-lc (resp.~sub-klt) if $(X/S, B+\bfM)$ is.
\item[(v)] If $\bfM$ is semiample over $S$, then so is $\bfM_f$.
\end{enumerate}
\end{thm}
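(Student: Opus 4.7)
The overall plan has three stages: reduce to $\FF=\QQ$ via Lemma~\ref{lem: R to Q}, perform a norm (pushforward) construction on a partial resolution provided by Lemma~\ref{lem: factorize}, and descend back to $Z$ using Lemma~\ref{lem: push&pull}. The principal obstacle is proving the nefness of the moduli part, for which I would invoke a Galois-closure averaging argument. To begin the reduction, I apply Lemma~\ref{lem: R to Q} to write
\[
K_X+B+\bfM_X = \sum_\alpha c_\alpha\,(K_X+B^{(\alpha)}+\bfM^{(\alpha)}_X)
\]
as a finite convex combination of $\QQ$-coefficient generalized pairs $(X/S, B^{(\alpha)}+\bfM^{(\alpha)})$, each satisfying $K_X+B^{(\alpha)}+\bfM^{(\alpha)}_X \sim_{\QQ,f} 0$ and sharing the non-klt and non-lc loci of $(X/S, B+\bfM)$. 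Assuming the theorem for $\QQ$-coefficients produces generalized pairs $(Z/S, B_Z^{(\alpha)} + \bfM_f^{(\alpha)})$; then setting $B_Z := \sum_\alpha c_\alpha B_Z^{(\alpha)}$ and $\bfM_f := \sum_\alpha c_\alpha \bfM_f^{(\alpha)}$ gives the required data. Properties (i)--(iii) and (v) transfer additively from the $\QQ$-case, and (iv) is immediate from the equalities of loci in Lemma~\ref{lem: R to Q}(iv).

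Assume now $\FF = \QQ$. Applying Lemma~\ref{lem: factorize} (after a preliminary blow-up so that $\bfM$ descends to $\widetilde X$) yields a commutative square with $\widetilde f\colon \widetilde X \to \widetilde Z$ finite of degree $d$ and birational $\rho\colon\widetilde X\to X$, $\mu\colon\widetilde Z\to Z$. Writing $K_{\widetilde X} + \widetilde B + \bfM_{\widetilde X} = \rho^*(K_X+B+\bfM_X)$ and choosing $D_Z$ on $Z$ with $K_X+B+\bfM_X \sim_\QQ f^*D_Z$, one has $K_{\widetilde X} + \widetilde B + \bfM_{\widetilde X} \sim_\QQ \widetilde f^*(\mu^* D_Z)$. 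Applying $\tfrac{1}{d}\widetilde f_*$ and using the ramification formula $K_{\widetilde X} = \widetilde f^* K_{\widetilde Z} + R_{\widetilde f}$ yields
\[
\mu^*D_Z \sim_\QQ K_{\widetilde Z} + B_{\widetilde Z} + M_{\widetilde Z},
\]
where $B_{\widetilde Z} := \tfrac{1}{d}\widetilde f_*(\widetilde B + R_{\widetilde f})$ and $M_{\widetilde Z} := \tfrac{1}{d}\widetilde f_*\bfM_{\widetilde X}$. I then set $B_Z := \mu_* B_{\widetilde Z}$ and take $\bfM_f$ to be the $\rmb$-divisor on $Z$ descending to $M_{\widetilde Z}$ on $\widetilde Z$.

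The key remaining point --- and the main obstacle --- is that $M_{\widetilde Z}$ be nef over $S$, and that the construction be compatible with further birational modifications of $\widetilde Z$, so that $\bfM_f$ genuinely defines a nef $\rmb$-$\QQ$-divisor on $Z$. Decomposing $\bfM = \sum_j \mu_j\bfM_j$ into its nef$/S$ $\QQ$-Cartier components reduces the problem to each $\bfM_j$ individually, whence $M_{\widetilde Z}$ is a $\QQ_{>0}$-linear combination of such norms, giving (i). For nefness of $\tfrac{1}{d}\widetilde f_* \bfM_{j,\widetilde X}$, take a Galois closure $h\colon\widehat X \to \widetilde Z$ of the extension $\CC(\widetilde X)/\CC(\widetilde Z)$ with group $G$; the $G$-average of the pull-back of $\bfM_{j,\widetilde X}$ to $\widehat X$ is a $G$-invariant nef divisor, and a direct computation identifies its descent to $\widetilde Z$ with $\tfrac{1}{d}\widetilde f_* \bfM_{j,\widetilde X}$, yielding nefness. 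Lemma~\ref{lem: push&pull} then guarantees that this construction commutes with further birational pullbacks of the base, so $\bfM_f$ is well-defined as a $\rmb$-divisor on $Z$. The remaining properties follow easily: (ii) holds because $R_{\widetilde f} \geq 0$; (iv) follows by tracking discrepancies through the birational $\mu$ and the finite $\widetilde f$ (the latter via Riemann--Hurwitz, which preserves sub-lc and sub-klt); and (v) holds because $G$-averaging and descent along $h$ both preserve semiampleness.
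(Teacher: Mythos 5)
Your proposal is correct and follows essentially the same route as the paper: reduce to $\QQ$-coefficients via Lemma~\ref{lem: R to Q}, pass to the partial simultaneous resolution of Lemma~\ref{lem: factorize} where $\bfM$ descends and $\widetilde f$ is finite, define $B_Z$ and $\bfM_f$ by $\tfrac{1}{\deg f}\,\widetilde f_*$ (norm) applied to the crepant pullback and to $\bfM_{\widetilde X}$ respectively, use Lemma~\ref{lem: push\&pull} for compatibility under further base change, and track discrepancies via Riemann--Hurwitz and the Negativity Lemma. The only genuine difference is cosmetic: for nefness (and semiampleness) of $\tfrac{1}{\deg f}\widetilde f_*\bfM_{\widetilde X}$ you pass to a Galois closure and average, whereas the paper simply invokes directly that pushforward under a finite morphism preserves nefness and (via Lemma~\ref{lem: sa}) semiampleness.
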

\begin{proof}
By Lemma~\ref{lem: R to Q}, we only need to consider the case $\FF=\QQ$.

Let $Z^\circ$ be the smooth locus of $Z$ and $X^\circ = f^{-1}(Z^\circ)$ its preimage.  Let $f|_{X^\circ}\colon X^\circ \rightarrow Z^\circ$ be the restriction of $f$. Then
  we have the Hurwitz formula 
\begin{equation}\label{eq: Hurwitz1}
K_{X^\circ} = (f|_{X^\circ})^*K_{Z^\circ} + R^\circ,
\end{equation}
where $R^\circ$ is the (effective) ramification divisor of $f|_{X^\circ}$. Let $R$ be the closure of $R^\circ$ in $X$. Now we define
\begin{equation}\label{eq: BZ}
B_Z  = \frac{1}{\deg f}  f_*(R + B).
\end{equation}
One sees immediately that $B_Z$ is effective if $B$ is. The $\rmb$-$\QQ$-divisor $\bfM_f$ is defined by specifying its traces on the higher birational models of $Z$ as follows: for any proper birational morphism $\mu\colon Z'\rightarrow Z$, consider the following commutative diagram
\[
\begin{tikzcd}
X'  \arrow[r, "\rho"] \arrow[d, "f'"]  & X \arrow[d, "f"]\\
 Z' \arrow[r, "\mu"]  & Z
\end{tikzcd}
\]
where $X'$ is the normalization of the main component of $X\times_Z Z'$. Define
\begin{equation}\label{eq: M}
\bfM_{f,  Z'} =\frac{1}{\deg f} f'_*\bfM_{X'},
\end{equation}
By the fact that $\bfM$ is a $\rmb$-$\QQ$-divisor, one readily verifies that the traces $\{\bfM_{f, Z'}\}_{Z'\rightarrow Z}$ are compatible with pushforward, so $\bfM_f$ is indeed a $\rmb$-$\QQ$-divisor. By \eqref{eq: Hurwitz1} we have $(f|_{X^\circ})_*K_{X^\circ} =(\deg f) K_{Z^\circ} + (f|_{X^\circ})_* R^\circ.$ Since $Z-Z^\circ$ has codimension at least two in $Z$,  
\begin{equation}\label{eq: Hurwitz2}
f_*K_{X} =(\deg f) K_{Z} + f_* R
\end{equation} 
also holds. 
Combining \eqref{eq: BZ}, \eqref{eq: M} and \eqref{eq: Hurwitz2}, we obtain
\begin{equation}\label{eq: Cartier}
  K_Z+B_Z+\bfM_{f,  Z} = \frac{1}{\deg f} f_*(K_{X} + B + \bfM_X).
\end{equation}
By the assumption that $K_X+B+\bfM_X\sim_{\QQ, f} 0$, there is a $\QQ$-Cartier $\QQ$-divisor $D$ on $Z$ such that $K_{X} + B + \bfM_X\sim_{\QQ} f^* D$. The projection formula together with \eqref{eq: Cartier} implies that $ K_Z+B_Z+\bfM_{f,  Z} \sim_{\QQ} D$ and thus 
\[
K_{X} + B + \bfM_X\sim_{\QQ} f^*(K_Z+B_Z+\bfM_{f,  Z}).
\]

We need to show that $\bfM_{f}$ is nef/$S$. Let $\rho\colon \widetilde X\rightarrow X$ be a higher birational model, to which $\bfM$ descends. By Lemma~\ref{lem: factorize}, up to replacing $\widetilde X$ by an even higher birational model, we can construct  the following commutative diagram
\begin{equation}\label{eq: descend}
\begin{tikzcd}
\widetilde X  \arrow[r, "\rho"] \arrow[d, "\widetilde{f}"]  & X \arrow[d, "f"]\\
\widetilde Z \arrow[r, "\mu"]  & Z
\end{tikzcd}
\end{equation}
such that $\mu$ is birational and $\widetilde f$ is finite. By Remark~\ref{rmk: fibre prod}, $\widetilde X$ is indeed the normalization of the main component of $X\times_Z \widetilde Z$ and $\widetilde f$ is induced from $f$. Since $\bfM_{\widetilde X}$ is nef$/S$ and $\widetilde f$ is a finite morphism, $\bfM_{f, \widetilde Z} = (1/\deg f) \widetilde f_*\bfM_{\widetilde X} $ is also nef/$S$.

Now we show that $\bfM_f$ descends to $\widetilde Z$. Let $\widehat\mu\colon \widehat Z \rightarrow \widetilde Z$ be a proper birational morphism. We consider the induced morphism $\widehat f\colon \widehat X\rightarrow \widehat Z$ fitting in the following commutative diagram:
\begin{equation}
\begin{tikzcd}
\widehat X  \arrow[r, "\widetilde \rho"] \arrow[d, "\widehat{f}"]  &\widetilde X  \arrow[r, "\rho"] \arrow[d, "\widetilde{f}"]  & X \arrow[d, "f"]\\
\widehat Z  \arrow[r, "\widetilde \mu"]   &\widetilde Z \arrow[r, "\mu"]  & Z
\end{tikzcd}
\end{equation}
By Lemma~\ref{lem: push&pull}, one has
\begin{equation}\label{eq: descend Z}
\bfM_{f, \widehat Z} = \widetilde \mu^*\bfM_{f, \widetilde Z},
\end{equation}
In other words, $\bfM_f$ descends to $\widetilde Z$.

Up to now we have shown that $(Z/S, B_Z + \bfM_f)$ is a generalized pair. To check that it has the right type of singularities, we look at the discrepancy of an arbitrary prime divisor $E$ over $Z$. First assume that $E$ is exceptional over $Z$. By replacing $\widetilde Z$ in the diagram \eqref{eq: descend} with a higher birational model we can assume that $E$ is a divisor on $\widetilde Z$. Let $F\subset \widetilde f^{-1}(E)$ be a prime divisor. Near the generic point of $F$, we compute as in the proof of \cite[5.20]{KM98}:
\begin{align*}
 K_{\widetilde X} &= \rho^*(K_X+B+\bfM_X) + a_F(X/S, B+\bfM) F \\
& \sim_{\QQ} \rho^*f^*(K_Z+B_Z+\bfM_{f, Z}) +a_F(X/S, B+\bfM) F, \\
 K_{\widetilde X} &=\widetilde f^*K_{\widetilde Z} + (r-1) F\\
  & = \widetilde f^*\mu^*(K_Z+B_Z+\bfM_{f, Z}) + ra_E(Z/S, B_Z+\bfM_f) F +(r-1) F\\
  &= \rho^*f^*(K_Z+B_Z+\bfM_{f, Z}) + (ra_E(Z/S, B_Z+\bfM_f) +(r-1))F.
\end{align*}
where is $r\leq \deg f$ is the ramification index of $\widetilde f$ along $F$. Now consider the Stein factorization $\widetilde X\xrightarrow{\rho'} X'\xrightarrow{f'} Z$, where $\rho'$ is a birational morphism between normal varieties with connected fibres and $f'$ is finite. Since $E$ is exceptional over $Z$, so is $F$ over $X'$. By the Negativity Lemma we can compare the coefficients of $F$ above and obtain
\[
 a_F(X/S, B+\bfM)+1 = r(a_E(Z/S, B_Z+\bfM_f)  +1).
\]
It follows that $ a_E(Z/S, B_Z+\bfM_f) \geq -1$ (resp.~$>-1$) if and only if the same inequality holds for  $ a_F(X/S, B+\bfM)$. 

Now we assume that $E$ is a component of $B_Z$. Let $B_i\subset f^{-1}(E)$ be the prime divisors of $X$ lying over $E$ and $d_i$ the degree of the generically finite morphism $f|_{B_i}\colon B_i\rightarrow E$. Then by \eqref{eq: BZ}
\[
a_E(Z/S, B_Z+\bfM_f) = -\mult_E B_Z = -\frac{\sum_id_i \mult_{B_i} B }{\deg f}.
\]
Since $\sum_i d_i\leq \deg f$, we infer that $a_E(Z/S, B_Z+\bfM_f) \geq -1$ (resp.~$>-1$) if  $\mult_{B_i} B\leq 1$ (resp.~$<1$) for each $i$.

In conclusion, $(Z/S, B_Z +\bfM_f)$ is sub-lc (resp.~sub-klt) if $(X/S, B+\bfM)$ is. 

If $\bfM$ is semiample, the divisor $\bfM_{\widetilde X}$ is semiample. Because  $\widetilde f$ is a finite morphism, $\bfM_{f, \widetilde Z} = (1/\deg f) \widetilde f_*\bfM_{\widetilde X}$ is also semiample by Lemma~\ref{lem: sa} below.
\end{proof}
\begin{rmk}
The construction of $B_Z$ in the proof of Theorem~\ref{thm: gen finite} is not the same as the one for morphisms with connected fibres (\cite[Section~4]{Filipazzi18gcbf}).
\end{rmk}
For the lack of reference, we give a proof of the the following lemma.
\begin{lem}\label{lem: sa}
Let $f\colon X\rightarrow Y$ be a finite morphism between proper normal varieties. Then, for a semiample $\RR$-divisor $D$ on $X$, the push-forward $f_*D$ is also semiample.
\end{lem}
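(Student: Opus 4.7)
The plan is to reduce the statement to the case of a semiample Cartier divisor, and then to produce, at each point of $Y$, an effective member of $|m\,f_*D|$ that avoids it by pushing forward a suitably chosen element of $|mD|$; the finiteness of the fibres of $f$ is what makes this possible.

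For the reduction, recall that a semiample $\RR$-divisor is an $\RR_{>0}$-linear combination $D=\sum_i a_i D_i$ of semiample Cartier divisors $D_i$. Each $f_*D_i$ is Cartier by Lemma~\ref{lem: push&pull}(i), and pushforward of Weil divisors is $\RR$-linear, so $f_*D=\sum_i a_i f_*D_i$. It therefore suffices to show that the pushforward of a semiample Cartier divisor is semiample Cartier. Thus assume from now on that $D$ is Cartier and that $|mD|$ is base point free for some positive integer $m$; my goal is to show $|m\,f_*D|$ is base point free on $Y$.

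Fix $y\in Y$ and consider its finite preimage $f^{-1}(y)=\{x_1,\dots,x_k\}$. For each $i$, the divisors of $|mD|$ passing through $x_i$ form a proper linear subsystem of $|mD|$ (proper because $x_i$ is not a base point), and a finite union of proper linear subspaces does not exhaust the projective space $|mD|$ (over the infinite field $\CC$), so there exists $E\in|mD|$ with $x_i\notin\Supp(E)$ for every $i$. Writing $E=mD+\mathrm{div}(g)$ for some $g\in\CC(X)$, the identity $f_*\mathrm{div}(g)=\mathrm{div}(\Norm_f g)$ invoked in the proof of Lemma~\ref{lem: push&pull} gives $f_*E=m\,f_*D+\mathrm{div}(\Norm_f g)\sim m\,f_*D$, and $f_*E$ is effective since $E$ is. Moreover, finiteness of $f$ yields $\Supp(f_*E)=f(\Supp(E))$, which does not contain $y$ by construction. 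Hence $y$ is not a base point of $|m\,f_*D|$; since $y$ was arbitrary, $m\,f_*D$ is base point free, so $f_*D$ is semiample Cartier.

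The only place where something nontrivial happens is the choice of a single effective member of $|mD|$ avoiding the finite set $f^{-1}(y)$, which is essentially elementary; the $\RR$-coefficient aspect disappears at the outset via the Cartier reduction, and the compatibility between pushforward and norms of rational functions is precisely the computation already carried out in Lemma~\ref{lem: push&pull}. I do not foresee a more serious obstacle.
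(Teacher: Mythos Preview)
Your proof is correct and follows essentially the same approach as the paper's: both reduce to the Cartier case by decomposing $D$ as an $\RR_{>0}$-combination of base-point-free Cartier divisors, and then, for each point of $Y$, push forward a member of the linear system that avoids the finite fibre. Your argument is somewhat more explicit about why such a member exists and why linear equivalence is preserved under pushforward, but the underlying idea is the same.
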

\begin{proof}
Since $D$ is a semiample $\RR$-divisor, we can write $D=\sum_i a_i D_i$, where $a_i\in\RR_{>0}$ and each $D_i$ is a Cartier divisor such that $|D_i|$ is base point free. It suffices to prove that $f_*D_i$ is semiample for each $i$. In fact, we will show that $|f_*D_i|$ is base point free: let $p\in Y$ be an arbitrary point. Since $f$ is a finite morphism, $f^{-1}(p)$ is a finite set of points. Because $|D_i|$ is base point free, one can find an element $D_i'\in |D_i|$ such that $f^{-1}(p)\cap D_i'=\emptyset$. Then $\overline D_i:=f_*D_i'\in|f_*D_i|$ is a divisor such that $p\not\in \Supp(\overline D_i$).  It follows that $p$ is not a base point of $|f_*D_i|$.
\end{proof}

\begin{proof}[Proof of Theorem~\ref{thm: main}]
By Lemma~\ref{lem: R to Q}, we only need to consider the case $\FF=\QQ$. Let $f\colon X\xrightarrow{g}Y \xrightarrow{h} Z$ be the Stein factorization. By \cite{Filipazzi18gcbf} one can define a generalized pair $(Y/S, B_Y +\bfM_g)$ such that $K_X+ B+\bfM_X\sim_{\QQ} g^*(K_Y + B_{Y} + \bfM_{g,Y})$. Since $K_X+B+\bfM_X\sim_{\QQ, f} 0 $, it necessarily holds $K_Y + B_{Y} + \bfM_{g,Y} \sim_{\QQ, h} 0$. By Theorem~\ref{thm: gen finite} we obtain the required generalized pair $(Z/S, B_Z + \bfM_f):=(Z/S, B_Z + \bfM_h)$. Note that, in both steps of the construction, the type of singularities is inherited by the new generalized pairs.
\end{proof}

\section{Applications}\label{sec: subadj} 
\subsection{Subadjunction} A subadjunction formula for log canonical generalized pairs is proved in \cite[Theorems~1.5 and 6.7]{Filipazzi18gcbf} when the lc center is exceptional or when the underlying variety is $\QQ$-factorial klt and the lc center is projective. It is based on the fact that the morphism from the unique divisor with discrepancy $-1$ over an exceptional lc center has connected fibres. Using Theorem~\ref{thm: main} we obtain a subadjunction formula with some of the assumptions in \cite[Theorems~1.5 and 6.7]{Filipazzi18gcbf} removed:
\begin{thm}\label{thm: subadj}
Let $\FF$ be either the rational number field $\QQ$ or the real number field $\RR$. Let $(X/S, B+\bfM)$ be a log canonical generalized pair over a quasi-projective scheme $S$ such that $\bfM$ is an $\FF_{>0}$-linear combination of nef/$S$ $\QQ$-Cartier $\rmb$-divisors.  If $W$ is a log canonical center of $(X/S, B+\bfM)$ and $W^\nu$ its normalization, then there exists an effective $\RR$-divisor $B_{W^\nu}$ and an $\FF_{>0}$-linear combination $\bfM_\iota$ of nef/$S$ $\QQ$-Cartier $\rmb$-divisors on $W^\nu$ such that
\begin{enumerate}[leftmargin=*]
\item $(W^\nu/S, B_{W^\nu} + \bfM_\iota)$ is a lc generalized pair, where $\iota\colon W^\nu\rightarrow W\hookrightarrow X$ is the composite, and
\item $K_{W^\nu}+B_{W^\nu} + \bfM_{\iota, W^\nu}\sim_\FF \iota^*(K_X+B+\bfM_X)$ holds.
\end{enumerate}   
Moreover, if $W$ is a minimal lc center, then $(W^\nu/S, B_{W^\nu} + \bfM_\iota)$ is klt.
\end{thm}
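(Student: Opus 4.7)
The plan is to reduce the statement to a situation where Theorem~\ref{thm: main} applies directly. First I would pass to a $\QQ$-factorial generalized dlt modification $\pi\colon Y\to X$ of $(X/S,B+\bfM)$, writing $K_Y+B_Y+\bfM_Y=\pi^{*}(K_X+B+\bfM_X)$ with $B_Y\geq0$ and $(Y/S,B_Y+\bfM)$ a generalized dlt pair; the existence of such modifications for generalized pairs is by now standard (Birkar--Zhang, Hacon--Liu). Since $W$ is an lc center of $(X/S,B+\bfM)$, I can choose an lc center $V$ of $(Y/S,B_Y+\bfM)$ with $\pi(V)=W$; moreover, if $W$ is a minimal lc center of $(X/S,B+\bfM)$, I would choose $V$ to be a minimal lc center of $(Y/S,B_Y+\bfM)$ that still dominates $W$.

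By the theory of generalized dlt pairs, $V$ is normal, so the restricted morphism $\pi|_V\colon V\to W$ factors uniquely through the normalization, producing a projective surjective morphism $\pi_V\colon V\to W^\nu$ with $\iota\circ\pi_V=\pi|_V$. Applying generalized divisorial adjunction successively along the components of $\lfloor B_Y\rfloor$ containing $V$ yields an effective $\FF$-divisor $B_V$ and an $\FF_{>0}$-linear combination $\bfM_V$ of nef$/S$ $\QQ$-Cartier $\rmb$-divisors on $V$ such that $(V/S,B_V+\bfM_V)$ is a lc generalized pair and
\[
K_V+B_V+\bfM_{V,V}=(K_Y+B_Y+\bfM_Y)|_V=\pi_V^{*}\iota^{*}(K_X+B+\bfM_X).
\]
In particular $K_V+B_V+\bfM_{V,V}\sim_{\FF,\pi_V}0$. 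When $V$ is a minimal lc center, the standard consequence of taking divisorial adjunction at every lc boundary component through $V$ further gives that $(V/S,B_V+\bfM_V)$ is generalized klt.

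Now I would apply Theorem~\ref{thm: main} to $\pi_V\colon V\to W^\nu$ to obtain an effective $\FF$-divisor $B_{W^\nu}$ and an $\FF_{>0}$-linear combination $\bfM_\iota$ of nef$/S$ $\QQ$-Cartier $\rmb$-divisors on $W^\nu$ such that $(W^\nu/S,B_{W^\nu}+\bfM_\iota)$ is lc (and klt when $(V/S,B_V+\bfM_V)$ is klt) with
\[
K_V+B_V+\bfM_{V,V}\sim_{\FF}\pi_V^{*}(K_{W^\nu}+B_{W^\nu}+\bfM_{\iota,W^\nu}).
\]
Combining this with the previous display gives $\pi_V^{*}\iota^{*}(K_X+B+\bfM_X)\sim_{\FF}\pi_V^{*}(K_{W^\nu}+B_{W^\nu}+\bfM_{\iota,W^\nu})$, and since $\pi_V$ is surjective this forces the desired relation $K_{W^\nu}+B_{W^\nu}+\bfM_{\iota,W^\nu}\sim_{\FF}\iota^{*}(K_X+B+\bfM_X)$.

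The hard part is the selection and analysis of $V$: one needs an lc center $V$ of the generalized dlt modification with $\pi(V)=W$ along which generalized divisorial adjunction produces a lc pair, and which is forced to be klt in the minimal case. This rests on the refined divisorial adjunction for generalized pairs developed by Birkar--Zhang, together with the structural fact that repeated adjunction at every boundary component meeting a minimal lc center of a generalized dlt pair yields a klt pair on that center. Once $(V/S,B_V+\bfM_V)$ is in hand, Theorem~\ref{thm: main} takes care of the rest in one stroke, since via its Stein factorization argument it handles the positive-dimensional fibres of $\pi_V$ (through Filipazzi's formula) and the generically finite part (through Theorem~\ref{thm: gen finite}) simultaneously, with no extra bookkeeping required.
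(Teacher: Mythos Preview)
Your proposal is correct and follows essentially the same route as the paper: pass to a $\QQ$-factorial dlt modification, choose an lc center $V$ mapping onto $W$, apply generalized divisorial adjunction on $V$, and then invoke Theorem~\ref{thm: main} for the induced morphism $V\to W^\nu$. The only cosmetic differences are that the paper selects $V$ to be minimal among lc centers dominating $W$ in all cases (not just when $W$ is minimal), and reads off the final $\sim_\FF$ relation directly from the construction in Theorem~\ref{thm: main} rather than by cancelling $\pi_V^*$ via surjectivity.
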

\begin{proof}
By \cite[Propostion~1.27]{HM18weak} or \cite[Proposition 3.9]{HL18wzd} there is a $\QQ$-factorial dlt modification $\rho\colon \widetilde X\rightarrow X$ of $(X/S, B+\bfM)$, so that $(\widetilde X,\widetilde B)$ is a $\QQ$-factorial dlt pair and every $\rho$-exceptional divisor has coefficient $1$ in $\widetilde B$, where 
$\widetilde B$ is given by $K_{\widetilde X} +\widetilde B+\bfM_{\widetilde X} = \rho^*(K_X+B+\bfM_X)$. 

Let $V$ be an lc center of $({\widetilde X}/S, B+\bfM)$ that is minimal with respect to inclusion under the condition $\rho(V) = W$. Then $V$ is normal, and there is a morphism $f\colon V\rightarrow W^\nu$ factoring $\rho|_V\colon V\rightarrow W\subset X$:
\begin{equation}
\begin{tikzcd}
V  \arrow[rr,hook, ] \arrow[d, "f"']  & & {\widetilde X} \arrow[d, "\rho"]\\
W^\nu  \arrow[r]  \arrow[bend right]{rr}{\iota} &W \arrow[r, hook]  & X
\end{tikzcd}
\end{equation}
where $\iota\colon W^\nu\rightarrow W\hookrightarrow X$ is the composite.
 
By \cite[Lemma~3.2]{HM18weak}, there is a dlt generalized pair $(V/S, B_V + \bfM_{\widetilde\iota})$ such that 
\[
(K_{\widetilde X} +\widetilde B + \bfM_{{\widetilde X}})|_V \sim_\FF K_V+B_{V} +\bfM_{\widetilde\iota,V}. 
\]
By construction, we have $K_V+B_{V} +\bfM_{\widetilde\iota,V}\sim_\FF\rho^{*}(K_X +B + \bfM_X)|_{V}$ and hence $K_V+B_{V} +\bfM_{\widetilde\iota,V}\sim_{\FF, f} 0$. Now Theorem~\ref{thm: main} implies that there is a lc generalized pair $(W^\nu/S, B_{W^\nu} + \bfM_\iota)$ such that $\bfM_\iota$  is an $\FF_{>0}$-linear combination of nef/$S$ $\QQ$-Cartier $\rmb$-divisors on $W^\nu$, and
$$K_V+B_{V} +\bfM_{\widetilde\iota, V} \sim_\FF f^*(K_{W^\nu}+B_{W^\nu} + \bfM_{\iota, W^\nu}).$$ By construction, we necessarily have $$K_{W^\nu}+B_{W^\nu} + \bfM_{\iota, W^\nu}\sim_\FF \iota^*(K_X+B+\bfM_X).$$ 
If $W$ is a minimal lc center, then $(V/S, B_V +\bfM_{\widetilde\iota})$ is klt and it follows that $(W^\nu/S, B_{W^\nu} + \bfM_\iota)$ is also klt by Theorem~\ref{thm: main}.
\end{proof}

\subsection{Images of anti-nef lc generalized pairs}\label{sec: anti-ps}
As another application of Theorem~\ref{thm: main}, we show that the image of an anti-nef lc generalized pair has the structure of a numerically trivial lc generalized pair.  This implies \cite[Main Theorem]{CZ2013QofDPSJEMS} in the setting of lc generalized pairs (with $\RR$-coefficients); see also \cite[Corollary~5.3]{Filipazzi18gcbf}.

\begin{thm}\label{thm: CZ}
Let $(X/S,B+\bfM)$ be a log canonical generalized pair (resp.~klt generalized pair) over a quasi-projective scheme $S$ such that $\bfM$ is an $\RR_{>0}$-linear combination of $\QQ$-Cartier $\rmb$-divisors that are nef over $S$.  If $-(K_X+B+\bfM_X)$ is nef/$S$ and there is a projective surjective morphism $f\colon X\to Z$ onto a normal variety $Z$ over $S$, then there is a log canonical generalized pair (resp.~klt generalized pair) $(Z/S,B_{Z}+\bfM_{f})$ with $\QQ$-coefficients such that $K_Z+B_{Z}+\bfM_{f,Z} \sim_{\QQ, S} 0$. In particular, if $Z$ is $\Qq$-Gorestein then $-K_Z$ is pseudo-effective over $S$.
\end{thm}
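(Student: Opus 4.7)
The strategy is to reduce, via a dlt modification and a rational decomposition, to a $\QQ$-coefficient generalized pair whose generalized log-canonical divisor is literally zero, to which Theorem~\ref{thm: main} applies directly.

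First, take a $\QQ$-factorial dlt modification $\rho\colon \widetilde X\to X$ of $(X/S,B+\bfM)$, so that $\widetilde X$ is $\QQ$-factorial and klt and $\rho^*(K_X+B+\bfM_X)=K_{\widetilde X}+\widetilde B+\bfM_{\widetilde X}$; in particular $-(K_{\widetilde X}+\widetilde B+\bfM_{\widetilde X})$ remains nef over $S$. Replacing $X$ and $f$ by $\widetilde X$ and $f\circ\rho$, we may assume $X$ is $\QQ$-factorial and klt. Corollary~\ref{cor: rat decomp}, applied with $\{B_i\}$ the prime components of $B$ and $\{\bfM_j\}$ the nef/$S$ $\QQ$-Cartier $\rmb$-divisors occurring in the $\RR_{>0}$-linear expression of $\bfM$, then produces $\QQ$-rational $(\Delta^{(k)},\bfN^{(k)})\in\ml_{IJ}$ and $c_k\in\RR_{>0}$ with $\sum_k c_k=1$, $K_X+B+\bfM_X=\sum_k c_k(K_X+\Delta^{(k)}+\bfN^{(k)}_X)$, and each $(X/S,\Delta^{(k)}+\bfN^{(k)})$ lc with $-(K_X+\Delta^{(k)}+\bfN^{(k)}_X)$ nef over $S$. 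In the klt case, by choosing the $(\Delta^{(k)},\bfN^{(k)})$ close enough to $(B,\bfM)$ (which lies in the interior of the klt subpolytope), every piece remains klt.

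Next, fix one such piece $(\Delta,\bfN):=(\Delta^{(k)},\bfN^{(k)})$ and absorb the anti-nef residual into the $\rmb$-part by setting
\[
\bfN':=\bfN+\overline{-(K_X+\Delta+\bfN_X)},
\]
where the added summand is the Cartier closure of the nef/$S$ $\QQ$-Cartier divisor $-(K_X+\Delta+\bfN_X)$ on $X$. Then $\bfN'$ is still a $\QQ_{>0}$-linear combination of nef/$S$ $\QQ$-Cartier $\rmb$-divisors, and by construction $K_X+\Delta+\bfN'_X=0$. A short computation on any log resolution $\mu\colon Y\to X$ on which $\bfN$ descends shows that the discrepancy divisor is unchanged when passing from $(X,\Delta+\bfN)$ to $(X,\Delta+\bfN')$: the added summand descends to $X$ itself, so its trace on $Y$ is $\mu^*(-(K_X+\Delta+\bfN_X))$, and expanding $\mu^*(K_X+\Delta+\bfN'_X)=0$ reproduces the same discrepancy divisor as for $\mu^*(K_X+\Delta+\bfN_X)$. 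Hence $(X/S,\Delta+\bfN')$ has the same singularities as $(X/S,\Delta+\bfN)$: lc, respectively klt.

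Finally, Theorem~\ref{thm: main} applies to $(X/S,\Delta+\bfN')$ and $f\colon X\to Z$, since $K_X+\Delta+\bfN'_X=0$ satisfies the relative triviality hypothesis trivially. It produces an lc (respectively klt) generalized pair $(Z/S,B_Z+\bfM_f)$ with $\QQ$-coefficients and
\[
0\sim_{\QQ}f^*(K_Z+B_Z+\bfM_{f,Z}).
\]
The projection formula together with Stein factorization (connected-fibre part via $f_*\mathcal{O}_X=\mathcal{O}_Z$, finite part via taking norms) forces $K_Z+B_Z+\bfM_{f,Z}\sim_{\QQ}0$, whence in particular $\sim_{\QQ,S}0$. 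The ``in particular'' claim then follows because $-K_Z\sim_{\QQ,S}B_Z+\bfM_{f,Z}$ is the sum of an effective divisor and the trace of a nef $\rmb$-divisor, which is pseudo-effective over $S$. The main obstacle is isolating the absorption trick that moves the nef/$S$ $\QQ$-Cartier divisor $-(K_X+\Delta+\bfN_X)$ from the log-canonical side into the nef $\rmb$-divisor while preserving both log canonicity and nefness; this step uses the flexibility of the generalized pair framework in an essential way and produces the relative triviality needed to invoke the canonical bundle formula.
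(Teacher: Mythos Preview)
Your proof is correct and follows essentially the same strategy as the paper: dlt modification, Corollary~\ref{cor: rat decomp}, the absorption trick of moving the anti-nef part into the nef $\rmb$-divisor, and then Theorem~\ref{thm: main}. The only difference is ordering: you first pass to a single $\QQ$-rational piece $(\Delta,\bfN)$ from Corollary~\ref{cor: rat decomp} and then absorb $-(K_X+\Delta+\bfN_X)$, which lets you skip Lemma~\ref{lem: R to Q}; the paper instead absorbs $-(K_X+B+\bfM_X)=\sum_k c_k N^{(k)}$ with $\RR$-coefficients first and then invokes Lemma~\ref{lem: R to Q} to rationalize. One small point to tighten: in the klt case, your claim that the $(\Delta^{(k)},\bfN^{(k)})$ can be chosen close enough to $(B,\bfM)$ to remain klt is not part of the statement of Corollary~\ref{cor: rat decomp}, though it follows immediately from its proof (rational points are dense in the rational polyhedral set $\anN_T(\ml_{IJ})$ and klt is open).
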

\begin{proof}
	Replacing $(X/S,B+\bfM)$ by its $\QQ$-factorial dlt modification, we may assume that $X$ is $\Qq$-factorial klt. 
	
		By Corollary~\ref{cor: rat decomp}, there exist finitely many real numbers $c_k\in\RR_{>0}$ and nef$/S$ Cartier divisors $N^{(k)}$ on $X$ such that $$-(K_X+B+\bfM_X)=\sum_k c_k N^{(k)}.$$ Let $\bfN^{(k)}= \overline {N^{(k)}}$ be the Cartier closure of $N^{(k)}$ and $\bfN=\bfM+\sum_k c_k \bfN^{(k)}$.  Then $(X/S,B+\bfN)$ is lc and $\bfN$ is an $\RR_{>0}$-linear combination of  $\QQ$-Cartier $\rmb$-divisors that are nef over $S$. By the construction of $\bfN$, we have $$K_X+B+\bfN_X=K_X+B+\bfM_X+\sum_k c_k N^{(k)}=0.$$ By Lemma~\ref{lem: R to Q} there is a lc generalized pair $(X/S,  B'+ \bfN')$  with $\QQ$-coefficients such that $K_X+ B' + \bfN'_X\sim_{\QQ, S} 0 $. 
			
By Theorem~\ref{thm: main}, there is a lc generalized pair $(Z/S,B_{Z}+\bfM_{f})$ with $\QQ$-coefficients such that
	$$0\sim_{\QQ, S} K_X+B' +\bfN'_X\sim_{\QQ} f^{*}(K_Z+B_{Z} +\bfM_{f,Z}).$$
It follows that $K_Z+B_{Z} +\bfM_{f,Z} \sim_{\QQ, S} 0$. If $Z$ is $\QQ$-Gorenstein then $-K_Z\sim_{\QQ, S} B_{Z} +\bfM_{f,Z}$ is pseudo-effective over $S$.

Finally, if $(X/S, B+\bfM)$ is klt, then $(X/S,B+\bfN)$ is klt. Consequently, the generalized pairs $(X/S, B'+\bfN')$ and  $(Z/S,B_{Z}+\bfM_{f})$ can both be made klt by Lemma~\ref{lem: R to Q} and Theorem~\ref{thm: main}.
\end{proof}

\bibliographystyle{cdraifplain}
	\bibliography{bibfile}

\def\bysame{\leavevmode ---------\thinspace}
\makeatletter\if@francais\providecommand{\og}{<<~}\providecommand{\fg}{~>>}
\else\gdef\og{``}\gdef\fg{''}\fi\makeatother
\def\cdrandname{\&}
\providecommand\cdrnumero{no.~}
\providecommand{\cdredsname}{eds.}
\providecommand{\cdredname}{ed.}
\providecommand{\cdrchapname}{chap.}
\providecommand{\cdrmastersthesisname}{Memoir}
\providecommand{\cdrphdthesisname}{PhD Thesis}
\begin{thebibliography}{10}

\bibitem{Ambro04JDG}
{\scshape F.~Ambro}, {\og Shokurov's boundary property\fg}, \emph{J.
  Differential Geom.} \textbf{67} (2004), \cdrnumero 2, p.~229-255.

\bibitem{Ambro05Compos}
\bysame , {\og The moduli {$b$}-divisor of an lc-trivial fibration\fg},
  \emph{Compos. Math.} \textbf{141} (2005), \cdrnumero 2, p.~385-403.

\bibitem{Birkar11}
{\scshape C.~Birkar}, {\og On existence of log minimal models {II}\fg},
  \emph{J. Reine Angew. Math.} \textbf{658} (2011), p.~99-113.

\bibitem{Bir19}
\bysame , {\og Anti-pluricanonical systems on {F}ano varieties\fg}, \emph{Ann.
  of Math. (2)} \textbf{190} (2019), \cdrnumero 2, p.~345-463.

\bibitem{Birkar20genpair}
\bysame , {\og Generalised pairs in birational geometry\fg},
  \emph{arXiv:2008.01008} (2020).

\bibitem{Birkar20polarvar}
\bysame , {\og Geometry and moduli of polarised varieties\fg},
  \emph{arXiv:2006.11238} (2020).

\bibitem{Birkar20connectedness}
\bysame , {\og On connectedness of non-klt loci of singularities of pairs\fg},
  \emph{arXiv: 2010.08226} (2020).

\bibitem{BirkarChen20}
{\scshape C.~Birkar {\normalfont \cdrandname}~Y.~Chen}, {\og Singularities on
  toric fibrations\fg}, \emph{arXiv:2010.07651} (2020).

\bibitem{BZ16}
{\scshape C.~Birkar {\normalfont \cdrandname}~D.-Q. Zhang}, {\og Effectivity of
  {I}itaka fibrations and pluricanonical systems of polarized pairs\fg},
  \emph{Publ. Math. Inst. Hautes \'Etudes Sci.} \textbf{123} (2016),
  p.~283-331.

\bibitem{Chen20compgpair}
{\scshape G.~Chen}, {\og Boundedness of $n$-complements for generalized
  pairs\fg}, \emph{arXiv:2003.04237} (2020).

\bibitem{ChenXue20compgpair}
{\scshape G.~Chen {\normalfont \cdrandname}~Q.~Xue}, {\og Boundedness of
  ($\epsilon,n$)-complements for projective generalized pairs of fano type\fg},
  \emph{arXiv:2008.07121} (2020).

\bibitem{CZ2013QofDPSJEMS}
{\scshape M.~Chen {\normalfont \cdrandname}~Q.~Zhang}, {\og On a question of
  {D}emailly-{P}eternell-{S}chneider\fg}, \emph{J. Eur. Math. Soc. (JEMS)}
  \textbf{15} (2013), \cdrnumero 5, p.~1853-1858.

\bibitem{Corti07}
{\scshape A.~Corti}, {\og 3-fold flips after {S}hokurov\fg}, in \emph{Flips for
  3-folds and 4-folds}, Oxford Lecture Ser. Math. Appl., vol.~35, Oxford Univ.
  Press, Oxford, 2007, p.~18-48.

\bibitem{Cutkosky03}
{\scshape S.~D. Cutkosky}, {\og Generically finite morphisms and simultaneous
  resolution of singularities\fg}, in \emph{Commutative algebra
  ({G}renoble/{L}yon, 2001)}, Contemp. Math., vol. 331, Amer. Math. Soc.,
  Providence, RI, 2003, p.~75-99.

\bibitem{Filipazzi18genpair}
{\scshape S.~Filipazzi}, {\og Boundedness of log canonical surface generalized
  polarized pairs\fg}, \emph{Taiwanese J. Math.} \textbf{22} (2018), \cdrnumero
  4, p.~813-850.

\bibitem{Filipazzi18gcbf}
\bysame , {\og On a generalized canonical bundle formula and generalized
  adjunction\fg}, \emph{arXiv:1807.04847v3, to appear in Ann. Sc. Norm. Super.
  Pisa Cl. Sci. (5)} (2018).

\bibitem{Filipazzi19phdthesis}
\bysame , \emph{Generalized pairs in birational geometry}, 2019, Thesis
  (Ph.D.)--The University of Utah.

\bibitem{Filipazzi20boundedness}
\bysame , {\og On the boundedness of $n$-folds with $\kappa(x)=n-1$\fg},
  arXiv:2005.05508, 2020.

\bibitem{FS20}
{\scshape S.~Filipazzi {\normalfont \cdrandname}~R.~Svaldi}, {\og Invariance of
  plurigenera and boundedness for generalized pairs\fg},
  \emph{arXiv:2005.04254, to appear in Matematica Contemporanea} (2020).

\bibitem{FS20connectedness}
\bysame , {\og On the connectedness principle and dual complexes for
  generalized pairs\fg}, \emph{arXiv: 2010.08018} (2020).

\bibitem{FW20connect}
{\scshape S.~Filipazzi {\normalfont \cdrandname}~J.~Waldron}, {\og
  Connectedness principle in characteristic $p>5$\fg}, \emph{arXiv:2010.08414}
  (2020).

\bibitem{Fujino99}
{\scshape O.~Fujino}, {\og Applications of {K}awamata's positivity theorem\fg},
  \emph{Proc. Japan Acad. Ser. A Math. Sci.} \textbf{75} (1999), \cdrnumero 6,
  p.~75-79.

\bibitem{Fujino15}
\bysame , {\og Some remarks on the minimal model program for log canonical
  pairs\fg}, \emph{J. Math. Sci. Univ. Tokyo} \textbf{22} (2015), \cdrnumero 1,
  p.~149-192.

\bibitem{Fujino17book}
\bysame , \emph{Foundations of the minimal model program}, MSJ Memoirs,
  vol.~35, Mathematical Society of Japan, Tokyo, 2017, xv+289~pages.

\bibitem{Fujino18}
\bysame , {\og Fundamental properties of basic slc-trivial fibrations\fg},
  \emph{arXiv:1804.11134, to appear in Publ. Res. Inst. Math. Sci} (2018).

\bibitem{FG12}
{\scshape O.~Fujino {\normalfont \cdrandname}~Y.~Gongyo}, {\og On canonical
  bundle formulas and subadjunctions\fg}, \emph{Michigan Math. J.} \textbf{61}
  (2012), \cdrnumero 2, p.~255-264.

\bibitem{FG14}
\bysame , {\og On the moduli b-divisors of lc-trivial fibrations\fg},
  \emph{Ann. Inst. Fourier (Grenoble)} \textbf{64} (2014), \cdrnumero 4,
  p.~1721-1735.

\bibitem{FM00}
{\scshape O.~Fujino {\normalfont \cdrandname}~S.~Mori}, {\og A canonical bundle
  formula\fg}, \emph{J. Differential Geom.} \textbf{56} (2000), \cdrnumero 1,
  p.~167-188.

\bibitem{HM18weak}
{\scshape C.~D. Hacon {\normalfont \cdrandname}~J.~Moraga}, {\og On weak
  {Z}ariski decompositions and termination of flips\fg},
  \emph{arXiv:1805.01600} (2018).

\bibitem{HanLi20accumulation}
{\scshape J.~Han {\normalfont \cdrandname}~Z.~Li}, {\og On accumulation points
  of pseudo-effective thresholds\fg}, \emph{arXiv:1812.04260, to appear in
  manuscripta mathematica} (2018).

\bibitem{HL18wzd}
\bysame , {\og Weak {Z}ariski decompositions and log terminal models for
  generalized polarized pairs\fg}, \emph{arXiv:1806.01234} (2018).

\bibitem{HanLi20fujita}
\bysame , {\og On {F}ujita's conjecture for pseudo-effective thresholds\fg},
  \emph{Math. Res. Lett.} \textbf{27} (2020), \cdrnumero 2, p.~377-396.

\bibitem{HanLjh20}
{\scshape J.~Han {\normalfont \cdrandname}~J.~Liu}, {\og Effective
  birationality for sub-pairs with real coefficients\fg}, \emph{arXiv:
  2007.01849} (2020).

\bibitem{HanLiu20nonvanishing}
{\scshape J.~Han {\normalfont \cdrandname}~W.~Liu}, {\og On numerical
  nonvanishing for generalized log canonical pairs\fg}, \emph{Doc. Math.}
  \textbf{25} (2020), p.~93-123.

\bibitem{Kawamata97subadj}
{\scshape Y.~Kawamata}, {\og Subadjunction of log canonical divisors for a
  subvariety of codimension {$2$}\fg}, in \emph{Birational algebraic geometry
  ({B}altimore, {MD}, 1996)}, Contemp. Math., vol. 207, Amer. Math. Soc.,
  Providence, RI, 1997, p.~79-88.

\bibitem{Kawamata98subadj}
\bysame , {\og Subadjunction of log canonical divisors. {II}\fg}, \emph{Amer.
  J. Math.} \textbf{120} (1998), \cdrnumero 5, p.~893-899.

\bibitem{Kodaira64AJM}
{\scshape K.~Kodaira}, {\og On the structure of compact complex analytic
  surfaces. {I}\fg}, \emph{Amer. J. Math.} \textbf{86} (1964), p.~751-798.

\bibitem{Fli92}
{\scshape J.~Koll\'ar}, \emph{Flips and abundance for algebraic threefolds},
  Soci\'et\'e Math\'ematique de France, Paris, 1992, Papers from the Second
  Summer Seminar on Algebraic Geometry held at the University of Utah, Salt
  Lake City, Utah, August 1991, Ast\'erisque No. 211 (1992), 1-258~pages.

\bibitem{KollarMMP13}
\bysame , \emph{Singularities of the minimal model program}, Cambridge Tracts
  in Mathematics, vol. 200, Cambridge University Press, Cambridge, 2013, With a
  collaboration of S\'andor Kov\'acs, x+370~pages.

\bibitem{KM98}
{\scshape J.~Koll\'ar {\normalfont \cdrandname}~S.~Mori}, \emph{Birational
  geometry of algebraic varieties}, Cambridge Tracts in Mathematics, vol. 134,
  Cambridge University Press, Cambridge, 1998, With the collaboration of C. H.
  Clemens and A. Corti, Translated from the 1998 Japanese original,
  viii+254~pages.

\bibitem{LMT20}
{\scshape V.~Lazi\'{c}, J.~Moraga {\normalfont \cdrandname}~N.~Tsakanikas},
  {\og Special termination for log canonical pairs\fg}, \emph{arXiv:2007.06458}
  (2020).

\bibitem{LP20genabundance1}
{\scshape V.~Lazi\'{c} {\normalfont \cdrandname}~T.~Peternell}, {\og On
  generalised abundance, {I}\fg}, \emph{Publ. Res. Inst. Math. Sci.}
  \textbf{56} (2020), \cdrnumero 2, p.~353-389.

\bibitem{LT19}
{\scshape V.~Lazi\'{c} {\normalfont \cdrandname}~N.~Tsakanikas}, {\og On the
  existence of minimal models for log canonical pairs\fg},
  \emph{arXiv:1905.05576, to appear in Publ. Res. Inst. Math. Sci} (2019).

\bibitem{Li20fibrations}
{\scshape Z.~Li}, {\og Boundedness of the base varieties of certain
  fibrations\fg}, \emph{arXiv:2002.06565} (2020).

\bibitem{Ljh18Sarkisov}
{\scshape J.~Liu}, {\og Sarkisov program for generalized pairs\fg},
  \emph{arXiv:1802.03926 , to appear in Osaka J. Math.} (2018).

\bibitem{Mori87}
{\scshape S.~Mori}, {\og Classification of higher-dimensional varieties\fg}, in
  \emph{Algebraic geometry, {B}owdoin, 1985 ({B}runswick, {M}aine, 1985)},
  Proc. Sympos. Pure Math., vol.~46, Amer. Math. Soc., Providence, RI, 1987,
  p.~269-331.

\bibitem{PS09}
{\scshape Y.~G. Prokhorov {\normalfont \cdrandname}~V.~V. Shokurov}, {\og
  Towards the second main theorem on complements\fg}, \emph{J. Algebraic Geom.}
  \textbf{18} (2009), \cdrnumero 1, p.~151-199.

\bibitem{stacks-project}
{\scshape T.~{Stacks Project Authors}}, {\og \textit{Stacks Project}\fg},
  \url{https://stacks.math.columbia.edu}, 2018.

\end{thebibliography}
\end{document}